\colorlet{purpleB70}{blue!70!red}
\colorlet{orangeR65}{red!65!yellow}
\definecolor{red2}{HTML}{d41173}
\definecolor{neongreen}{HTML}{1bf702}
\definecolor{radicalred}{HTML}{FF355E}
\definecolor{denim}{HTML}{1560BD}
\definecolor{darkcyan}{rgb}{0.0, 0.55, 0.55}
\definecolor{cilek}{HTML}{FF43A4}
\definecolor{mor}{HTML}{9F00C5}
\definecolor{phlox}{rgb}{0.87, 0.0, 1.0}
\definecolor{fluorescentpink}{HTML}{FF1493}
\definecolor{napiergreen}{rgb}{0.16, 0.5, 0.0}
\definecolor{kellygreen}{rgb}{0.3, 0.73, 0.09}
\definecolor{parisgreen}{HTML}{ 50C878 }
\definecolor{palatinateblue}{rgb}{0.15, 0.23, 0.89}
\definecolor{ceruleanblue}{rgb}{0.16, 0.32, 0.75}
\definecolor{brandeisblue}{rgb}{0.0, 0.44, 1.0}
\definecolor{KLMblue}{HTML}{0FC0FC}
\definecolor{cinnamon}{rgb}{0.82, 0.41, 0.12}
\definecolor{darkorange}{rgb}{1.0, 0.55, 0.0}
\definecolor{darktangerine}{rgb}{1.0, 0.66, 0.07}
\definecolor{deepcarrotorange}{rgb}{0.91, 0.41, 0.17}
\definecolor{internationalorange}{HTML}{FF4F00}
\definecolor{persimmon}{HTML}{EC5800}
\definecolor{pumpkin}{HTML}{FF7518}
\definecolor{darkred}{rgb}{1,0,0} 
\definecolor{darkgreen}{rgb}{0,0.7,0}
\definecolor{darkblue}{rgb}{0,0,1}
\def\reflb#1#2{\begingroup
    #2%
    \def\@currentlabel{#2}%
    \phantomsection\label{#1}\endgroup
}
\numberwithin{equation}{section}
\newtheorem{Theorem}{Theorem}
\numberwithin{Theorem}{section}
\newtheorem   {Lemma}[Theorem]{Lemma}
\newtheorem   {Claim}[Theorem]{Claim}
\newtheorem   {Proposition}[Theorem]{Proposition}
\newtheorem   {Corollary}[Theorem]{Corollary}
\theoremstyle {definition}
\newtheorem   {Definition}[Theorem]{Definition}
\theoremstyle {remark}
\newtheorem   {Remark}[Theorem]{Remark}
\newtheorem   {Example}[Theorem]{Example}
\newcommand{\CA}{{\mathcal A}}
\newcommand{\CC}{{\mathcal C}}
\newcommand{\tCC}{\tilde{\mathcal C}}
\newcommand{\tCB}{\tilde{\mathcal B}}
\newcommand{\id}{{\mathit id}}
\newcommand{\charr}{{\mathit char}\,}
\newcommand{\tJ}{\tilde{J}}
\newcommand{\hCB}{\hat{\mathcal B}}
\newcommand{\halpha}{\hat{\alpha}}
\newcommand{\hgamma}{\hat{\gamma}}
\newcommand{\hxi}{\hat{\xi}}
\newcommand{\heta}{\hat{\eta}}
\newcommand{\td}{\tilde{d}}
\newcommand{\tgamma}{\tilde{\gamma}}
\newcommand{\tvarphi}{\tilde{\varphi}}
\newcommand{\txi}{\tilde{\xi}}
\newcommand{\teta}{\tilde{\eta}}
\newcommand{\talpha}{\tilde{\alpha}}
\newcommand{\tGamma}{\tilde{\Gamma}}
\newcommand{\CB}{{\mathcal B}}
\newcommand{\PP}{{\mathcal P}}
\newcommand{\bPP}{\bar{\mathcal P}}
\def    \F      {{\mathbb F}}
\def    \R      {{\mathbb R}}
\def    \Z      {{\mathbb Z}}
\def    \N      {{\mathbb N}}
\def    \Q      {{\mathbb Q}}
\def    \CP     {{\mathbb C}{\mathbb P}}
\def    \12     {{\frac{1}{2}}}
\def    \rk     {\operatorname{rk}}
\def    \tSp    {\operatorname{\widetilde{Sp}}}
\def    \HF     {\operatorname{HF}}
\def    \Co     {\operatorname{CC}}
\def    \HQ     {\operatorname{HQ}}
\def    \H      {\operatorname{H}}
\def    \Co     {\operatorname{C}}
\def    \HM      {\operatorname{HM}}
\def    \CF      {\operatorname{CF}}
\def    \bPP     {\bar{\mathcal{P}}}
\def    \bx      {\bar{x}}
\def    \by      {\bar{y}}
\def    \bz      {\bar{z}}
\def    \Fl    {\scriptscriptstyle{Fl}}
\def    \qq    {\mathrm{q}}
\def    \hh    {\mathrm{h}}
\def    \cf    {\operatorname{c}}
\newcommand \Sq   {\operatorname{Sq}}
\newcommand \eq   {\scriptscriptstyle{eq}}
\newcommand \even {\scriptscriptstyle{even}}
\newcommand \unit  {\mathbbm{1}}
\begin{document}


\setlength{\smallskipamount}{6pt}
\setlength{\medskipamount}{10pt}
\setlength{\bigskipamount}{16pt}





\title[Another look at the Hofer--Zehnder conjecture]{Another look at
  the Hofer--Zehnder conjecture}

\author[Erman \c C\. inel\. i]{Erman \c C\. inel\. i}
\author[Viktor Ginzburg]{Viktor L. Ginzburg}
\author[Ba\c sak G\"urel]{Ba\c sak Z. G\"urel}

\address{E\c C and VG: Department of Mathematics, UC Santa Cruz, Santa
  Cruz, CA 95064, USA} \email{scineli@ucsc.edu}
\email{ginzburg@ucsc.edu}

\address{BG: Department of Mathematics, University of Central Florida,
  Orlando, FL 32816, USA} \email{basak.gurel@ucf.edu}

\subjclass[2010]{53D40, 37J10, 37J45} 

\keywords{Periodic orbits, Hamiltonian diffeomorphisms, Frank's
  theorem, equivariant Floer cohomology, pseudo-rotations}

\date{\today} 

\thanks{The work is partially supported by NSF CAREER award
  DMS-1454342 (BG) and by Simons Foundation Collaboration Grant 581382
  (VG)}


\begin{abstract}
  We give a different and simpler proof of a slightly modified (and
  weaker) variant of a recent theorem of Shelukhin extending Franks'
  ``two-or-infinitely-many'' theorem to Hamiltonian diffeomorphisms in
  higher dimensions and establishing a sufficiently general case of
  the Hofer--Zehnder conjecture. A few ingredients of our proof are
  common with Shelukhin's original argument, the key of which is
  Seidel's equivariant pair-of-pants product, but the new proof
  highlights a different aspect of the periodic orbit dynamics of
  Hamiltonian diffeomorphisms.
\end{abstract}

\maketitle

\begin{center}\hfill\emph{Dedicated to Claude Viterbo on the occasion
    of his 60th birthday}
\end{center}

\tableofcontents

\section{Introduction and main results}

\subsection{Introduction}
\label{sec:intro}
In this paper we give a different and simpler proof of a slightly
modified and weaker version of a recent theorem of Shelukhin,
\cite{Sh:HZ}, extending Franks' ``two-or-infinitely-many'' theorem,
\cite{Fr1,Fr2}, to higher dimensions.

This celebrated theorem of Franks asserts that every area preserving
diffeomorphism of $S^2$ has either exactly two or infinitely many
periodic points. (Moreover, in the setting of Franks' theorem, there
are also strong growth rate results; see, e.g, \cite{FH, LeC, Ke12}.)
A generalization of Franks' theorem conjectured in \cite[p.\ 263]{HZ}
is that a Hamiltonian diffeomorphism $\varphi$ of a closed symplectic
manifold has infinitely many periodic points whenever $\varphi$ has
``more than absolutely necessary'' fixed points. (Hence, the title of
\cite{Sh:HZ} and of this paper.) The vaguely stated lower bound ``more
than absolutely necessary'' is usually interpreted as a lower bound
arising from some version of the Arnold conjecture, e.g., as the sum
of the Betti numbers. For $\CP^n$, the expected threshold is $n+1$
regardless of the non-degeneracy assumption and, in particular, it is
$2$ for $S^2=\CP^1$ as in Franks' theorem. A slightly different
interpretation of the conjecture, not directly involving the count of
fixed points, is that the presence of a fixed or periodic point that
is unnecessary from a homological or geometrical perspective is
already sufficient to force the existence of infinitely many periodic
points. We refer the reader to \cite{GG:hyperbolic, Gu:nc, Gu:hq} for
some results in this direction.

We note that whenever $\varphi$ has finitely many periodic points, by
passing to an iterate one can assume them to be fixed
points. Furthermore, $\varphi$ has infinitely many periodic points if
and only if it has infinitely many simple, i.e., un-iterated, periodic
orbits and the results are often stated in these terms. It is also
worth keeping in mind that all known Hamiltonian diffeomorphisms
$\varphi$ with finitely many periodic orbits are strongly
non-degenerate, i.e., $\varphi^k$ is non-degenerate for all $k\in\N$.

Volume preserving diffeomorphisms or flows with finitely many simple
periodic orbits play an important role in dynamics; see, e.g.,
\cite{FK} and references therein. In the Hamiltonian setting they are
sometimes referred to as pseudo-rotations. Recently, symplectic
topological methods have been employed to study the dynamics of
pseudo-rotations and its connections with symplectic topological
properties of the underlying manifold in all dimensions; see \cite{AS,
  Ba, Br, Br:Annals, CGG:CMD, CGG:St, GG:PR, LRS, Sh, Sh:new}.

The original proof of Franks' theorem utilized methods from
low-dimensional dynamics, and the first purely symplectic topological
proof was given in \cite{CKRTZ}. However, that proof and also a
different approach from \cite{BH} were still strictly low-dimensional,
and Shelukhin's theorem, \cite[Thm.\ A]{Sh:HZ}, is the first
sufficiently general higher-dimensional variant of Franks'
theorem. (Strictly speaking, \cite[Thm.\ A]{Sh:HZ} and our Theorem
\ref{thm:Sh} and Corollary \ref{cor:Sh}, which are overall slightly
weaker, still fall short of completely reproving Franks' theorem in
dimension two; we will discuss and compare these results in Section
\ref{sec:results}.) Similarly to \cite{Sh:HZ}, the key ingredient of
our proof is Seidel's $\Z_2$-equivariant pair-of-pants product,
\cite{Se}. (While we use the original version of the product,
\cite{Sh:HZ} relies on its $\Z_p$-equivariant version from
\cite{ShZ}.) Our proof also uses several simple ingredients from
persistent homology theory in the form developed in \cite{UZ} (see
also \cite{PS}), although to a much lesser degree than \cite{Sh:HZ}.

Finally, it is worth pointing out that Hamiltonian pseudo-rotations
are extremely rare and most of the manifolds do not admit such
maps. This statement is known as the Conley conjecture. The state of
the art result is that the Conley conjecture holds for a manifold
$(M,\omega)$ unless there exists $A\in\pi_2(M)$ such that
$\left<c_1(TM),\,A\right>>0$ and $\left<\omega,\,A\right>>0$; see
\cite{Ci,GG:Rev}. For example, the Conley conjecture holds when
$c_1(TM)|_{\pi_2(M)}=0$ or when $M$ is negative monotone. For many
manifolds the conjecture is also known to hold $C^\infty$-generically
(see \cite{GG:generic}); we refer the reader to \cite{GG:survey} for a
detailed survey and further references.

\subsection{Shelukhin's theorem}
\label{sec:results}
Let $\varphi$ be a Hamiltonian diffeomorphism of a closed monotone
symplectic manifold $M$. We view $\varphi$ as the time-one map in a
time-dependent Hamiltonian flow and denote by $\PP_k(\varphi)$ the set
of its $k$-periodic points, arising from contractible $k$-periodic
orbits.  The Hamiltonian diffeomorphism $\varphi$ is said to be
\emph{$k$-perfect} if $\PP_k(\varphi)=\PP_1(\varphi)$ and
\emph{perfect} if $\varphi$ is $k$-perfect for all $k\in\N$. (We refer
the reader to Sections \ref{sec:prelim} for further notation and
definitions used here.) We call $\varphi$ a non-degenerate
\emph{pseudo-rotation} over a field $\F$ if it is non-degenerate,
perfect and the differential in the Floer complex of $\varphi$ over
$\F$ vanishes. This condition is independent of the choice of an
almost complex structure and, by Arnold's conjecture, equivalent to
that the number of 1-periodic orbits $|\PP_1(\varphi)|$ is equal to
the sum of Betti numbers of $M$ over $\F$. Denote by $\beta(\varphi)$
the \emph{barcode norm} of $\varphi$ over $\F$, i.e., the length of
the maximal finite bar in the \emph{barcode} of $\varphi$; see
Section~\ref{sec:barcodes}.

One of the goals of this paper is to give a simple proof to the
following theorem proved in a slightly different form in \cite{Sh:HZ}.

\begin{Theorem}[Shelukhin's Theorem, \cite{Sh:HZ}]
  \label{thm:Sh}
  Assume that $\varphi$ is strongly non-degenerate and perfect and
  that $\beta(\psi)$ over $\F_2:=\Z_2$ is bounded from above for all
  Hamiltonian diffeomorphisms $\psi$ of $M$ or at least for all
  iterates $\psi=\varphi^{2^k}$ (e.g., $M=\CP^n$).  Then $\varphi$ is
  a pseudo-rotation.
\end{Theorem}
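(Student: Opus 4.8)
Since $\varphi$ is non-degenerate and perfect by hypothesis, it suffices to prove that the Floer differential of $\varphi$ over $\F_2$ vanishes, i.e.\ that $\beta(\varphi)=0$. Suppose not, so that $\beta(\varphi)=\ell_0>0$. By strong non-degeneracy every iterate $\psi_k:=\varphi^{2^k}$ is non-degenerate, hence its barcode $\CB(\psi_k)$ is defined, and by hypothesis $\beta(\psi_k)\le B$ for all $k$ and some $B<\infty$. Moreover, since $\varphi$ is perfect, the iteration map $\gamma\mapsto\gamma^{(2^k)}$ identifies $\PP_1(\varphi)$ with $\PP_{2^k}(\varphi)$, so the underlying filtered vector space of $\CF(\psi_k;\F_2)$ is that of $\CF(\varphi;\F_2)$ with the action rescaled by $2^k$ (the iterate $\gamma^{(2^k)}$ has action $2^k\CA(\gamma)$); this rescaling is the geometric reason for the doubling below.

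The core of the argument is a bar-doubling inequality: for every strongly non-degenerate, perfect Hamiltonian diffeomorphism $\psi$ of $M$,
\[
  \beta(\psi^2)\ \ge\ 2\,\beta(\psi).
\]
I would derive this from Seidel's $\Z_2$-equivariant pair-of-pants product \cite{Se}. The ordinary pair-of-pants chain map $\CF(\psi;\F_2)^{\otimes 2}\to\CF(\psi^2;\F_2)$ multiplies the action filtration by $2$, as the orbit $\gamma^{(2)}$ has action $2\,\CA(\gamma)$. Seidel's equivariant enhancement promotes this to a morphism of $\F_2[u]$-modules which becomes an isomorphism after inverting $u$ (the Floer-theoretic Smith inequality). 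Feeding this into the persistence-module formalism of \cite{UZ} and tracking the $\F_2[u]$-action on the filtered complexes, one reads off that a finite bar of length $\ell$ in $\CB(\psi)$ gives rise to a finite bar of length at least $2\ell$ in $\CB(\psi^2)$: the equivariant map interleaves the $2$-rescaled persistence module of $\psi$ with that of $\psi^2$, and the $u$-torsion detected by the equivariant homology keeps the doubled bar from being capped off too early. Perfection is used here to ensure that $\psi^2$ carries no periodic orbits beyond the iterates of those of $\psi$, so that these two persistence modules are interleaved with no correction term.

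Granting the inequality, the theorem follows immediately. Applying it to $\psi=\psi_k$ gives $\beta(\psi_{k+1})\ge 2\,\beta(\psi_k)$ for all $k\ge0$, so by downward induction $\beta(\varphi)\le 2^{-k}\beta(\psi_k)\le 2^{-k}B$ for every $k\in\N$. Letting $k\to\infty$ forces $\beta(\varphi)=0$, contradicting $\ell_0>0$; hence the Floer differential of $\varphi$ over $\F_2$ vanishes and $\varphi$ is a pseudo-rotation. When boundedness is assumed only along the iterates $\psi_k$ (as for $M=\CP^n$), the same computation applies without change.

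The one genuinely hard step is the bar-doubling inequality, and within it two points demand care. First is the passage from Seidel's equivariant \emph{homological} Smith inequality to a \emph{filtered} statement about bar lengths: since the $u$-action shifts the grading and, after the rescaling, the action filtration, one must rule out that a length-$\ell$ feature of $\CB(\psi)$ gets absorbed rather than doubled under squaring. Second, monotone Floer homology is $\Z_{2N}$-periodic, so the barcode must be organized --- over the appropriate Novikov completion, or on the universal cover of the contractible loop space --- so that ``finite bar'' and ``bar length'' are unambiguous and transform correctly under the $2$-rescaling. With these in place, the reduction of the first paragraph and the iteration of the third are routine.
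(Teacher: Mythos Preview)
Your iterative framework --- derive a doubling inequality for some bar length under squaring, then iterate along $\varphi^{2^k}$ to contradict the uniform bound on $\beta$ --- is exactly the paper's strategy. The substantive difference is \emph{which} bar doubles. You propose $\beta(\psi^2)\ge 2\beta(\psi)$ for the \emph{longest} finite bar, which is closer to Shelukhin's original argument; the paper instead proves the doubling for the \emph{shortest} bar: $2\beta_{\min}(\varphi)\le\beta_{\min}(\varphi^2)$. This comes from the paper's main new result (Theorem~\ref{thm:main}), which gives a precise one-to-one correspondence between shortest arrows in the Floer graph $\Gamma(\varphi)$ and shortest arrows in the \emph{equivariant} Floer graph $\Gamma_{\eq}(\varphi^2)$, hence $\beta^{\eq}_{\min}(\varphi^2)=2\beta_{\min}(\varphi)$; the passage back to the non-equivariant side uses only the trivial observation that every non-equivariant arrow is an equivariant arrow, so $\beta^{\eq}_{\min}(\varphi^2)\le\beta_{\min}(\varphi^2)$. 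The argument then concludes via $\beta_{\min}\le\beta$.

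The advantage of the paper's route is exactly at the point you flag as ``genuinely hard'': tracking a single shortest arrow requires only Seidel's non-vanishing theorem (Proposition~\ref{prop:non-van}) applied orbit-by-orbit to transport a singular decomposition of $\CF(\varphi)$ to one of (a quotient of) $\CF_{\eq}(\varphi^2)$, with no need for a filtered Smith-type comparison of entire persistence modules or any interleaving argument. Your sketch of the longest-bar doubling --- Smith inequality plus persistence formalism plus ``the $u$-torsion keeps the doubled bar from being capped off'' --- is not yet a proof, as you yourself note; making it one essentially reconstructs Shelukhin's original argument, which is what this paper is offering an alternative to.
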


Applying this to the iterates $\varphi^{2^k}$ we obtain

\begin{Corollary}[\cite{Sh:HZ}]
    \label{cor:Sh}
    Assume that $\varphi$ is strongly non-degenerate,
    $\beta\big(\varphi^{2^k}\big)$ over $\F_2$ is bounded from above
    (e.g., $M=\CP^n$), and $|\PP_1(\varphi)|$ is strictly greater than
    the sum of Betti numbers of $M$ over $\F_2$. Then
    $\big|\PP_{2^k}(\varphi)\big|\to\infty$ as $k\to\infty$.
  \end{Corollary}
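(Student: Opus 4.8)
The plan is to argue by contradiction and to deduce the statement from Theorem~\ref{thm:Sh} applied to a suitable dyadic iterate of $\varphi$. Suppose $\big|\PP_{2^k}(\varphi)\big|$ does not go to infinity. Since an iterated orbit has the same underlying point set as the original one and iteration preserves contractibility, we obtain an increasing chain $\PP_1(\varphi)\subseteq\PP_2(\varphi)\subseteq\PP_4(\varphi)\subseteq\cdots$, and each $\PP_{2^k}(\varphi)$ is finite because $\varphi$ is strongly non-degenerate. Hence $k\mapsto\big|\PP_{2^k}(\varphi)\big|$ is non-decreasing and bounded, so it is eventually constant: there is $k_0\in\N$ with $\PP_{2^{k_0+j}}(\varphi)=\PP_{2^{k_0}}(\varphi)$ for all $j\geq 0$.

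Now set $\psi:=\varphi^{2^{k_0}}$. Then $\psi$ is strongly non-degenerate, since every iterate of $\psi$ is an iterate of $\varphi$; for every $j\geq 0$ one has $\PP_{2^j}(\psi)=\PP_{2^{k_0+j}}(\varphi)=\PP_{2^{k_0}}(\varphi)=\PP_1(\psi)$; and $\beta\big(\psi^{2^j}\big)=\beta\big(\varphi^{2^{k_0+j}}\big)$ is bounded from above by hypothesis. Thus $\psi$ meets the hypotheses of Theorem~\ref{thm:Sh}, so $\psi$ is a pseudo-rotation, and in particular $\big|\PP_1(\psi)\big|$ equals the sum of the Betti numbers of $M$ over $\F_2$. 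On the other hand $\PP_1(\psi)=\PP_{2^{k_0}}(\varphi)\supseteq\PP_1(\varphi)$, so $\big|\PP_1(\psi)\big|\geq\big|\PP_1(\varphi)\big|$, which by assumption is strictly greater than the sum of the Betti numbers of $M$ over $\F_2$ --- a contradiction. Therefore $\big|\PP_{2^k}(\varphi)\big|\to\infty$ as $k\to\infty$.

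The one point that requires care is the assertion that $\psi$ meets the hypotheses of Theorem~\ref{thm:Sh}: what the stabilization above gives is only that $\psi$ is $2^j$-perfect for every $j$, which is formally weaker than being $k$-perfect for every $k$ (a diffeomorphism can be $2^j$-perfect for all $j$ while still carrying, say, a contractible $3$-periodic orbit). I expect this to be harmless, since the proof of Theorem~\ref{thm:Sh} only manipulates the dyadic tower of iterates $\psi,\psi^2,\psi^4,\ldots$ --- this is already reflected in the fact that the $\beta$-bound in Theorem~\ref{thm:Sh} is needed only along that tower --- so that $2^j$-perfectness for all $j$ is all that actually enters the argument; pinning this reduction down is the step I would be most careful about, and the only place where the deduction is not completely routine.
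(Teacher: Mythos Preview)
Your argument is correct and is essentially the paper's own proof. The paper derives Corollary~\ref{cor:Sh} from Theorem~\ref{thm:Sh} in a single phrase --- ``Applying this to the iterates $\varphi^{2^k}$ we obtain'' --- and your contrapositive with the stabilized dyadic iterate $\psi=\varphi^{2^{k_0}}$ is exactly the natural way to unpack that sentence. Your caution about perfectness versus $2^j$-perfectness is well placed and your resolution is right: the proof of Theorem~\ref{thm:Sh} in Section~\ref{sec:implications} only iterates Theorem~\ref{thm:main} along the dyadic tower (each step needing merely that $\varphi^{2^k}$ be $2$-perfect and $\varphi^{2^{k+1}}$ non-degenerate), so the weaker hypothesis you obtain on $\psi$ is already sufficient.
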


  This theorem is proved in Section \ref{sec:implications} as an easy
  consequence of Theorem \ref{thm:main}, a new result in this
  paper. (However, at least on the conceptual level, our proof of that
  theorem is also a subset of Shelukhin's argument, although the
  inclusion is rather implicit.)
  
  In the rest of this section we discuss the conditions of Theorem
  \ref{thm:Sh} and also some of the differences between Corollary
  \ref{cor:Sh} and the original Shelukhin's theorem, \cite[Thm.\
  A]{Sh:HZ}, which is in several ways more general and more precise.

  First of all, the coefficient field in \cite[Thm.\ A]{Sh:HZ} is $\Q$
  rather than $\F_2$ and the assertion is that $\PP_p(\varphi)$
  contains a simple periodic orbit for every large prime $p$. As a
  consequence, one obtains the growth of order at least $O(k/\log k)$
  for the number of simple periodic orbits of period up to $k$.  This
  difference stems from the fact that the main tool used in
  \cite{Sh:HZ} is the $\Z_p$-equivariant pair-of-pants product
  introduced in \cite{ShZ} while we rely on a somewhat simpler
  $\Z_2$-equivariant pair-of-pants product defined in \cite{Se}. We
  touch upon the $p$-iterated analogues of Theorem \ref{thm:Sh} and
  Corollary \ref{cor:Sh} in Remark \ref{rmk:char-p2}.

  Secondly, \cite[Thm.\ A]{Sh:HZ} allows for some degeneracy of
  $\varphi$. Namely, in the setting of Corollary \ref{cor:Sh}, the
  number of 1-periodic orbits $|\PP_1(\varphi)|$ in the condition that
  $|\PP_1(\varphi)|$ is strictly greater than the sum of Betti numbers
  is replaced by
  \begin{equation}
    \label{eq:local-sum}
    \sum_{x\in\PP_1(\varphi)}\dim_{\F}\HF(x;\F),
  \end{equation}
  where $\HF(x;\F)$ is the local Floer (co)homology of $x$ with
  coefficients in a field $\F$ (see, e.g., \cite{GG:gap}); $\F=\Q$ in
  \cite{Sh:HZ}. Note that, as a consequence, Corollary \ref{cor:Sh}
  still holds without the non-degeneracy assumption, provided that the
  number of 1-periodic orbits with $\HF(x;\F)\neq 0$ is greater than
  the sum of Betti numbers. In the setting of this paper, one should
  take $\F=\F_2$ and we will further discuss the degenerate case of
  Theorem \ref{thm:Sh} and Corollary \ref{cor:Sh} in Section
  \ref{sec:degenerate}. Overall, the role of the condition that
  $\HF(x;\F)\neq 0$ is unclear to us beyond the case of $S^2$.
  Franks' theorem has an analogue for a certain class of
  symplectomorphisms of surfaces and then, interestingly, this
  condition becomes essential; see \cite{Bat, GG:generic}.

  However, from our perspective, the most important difference lies in
  the proofs, which highlight different aspects of the dynamics and
  Floer theory of $\varphi$. Our proof focuses on the behavior of the
  shortest bar $\beta_{\min}$ in the barcode of $\varphi$ (rather than
  the longest bar $\beta\geq \beta_{\min}$) or, to be more precise, of
  the shortest Floer arrow under the iteration from $\varphi$ to
  $\varphi^2$; see Section \ref{sec:main}. In particular, we show in
  Theorem \ref{thm:main} that when $\varphi$ is 2-perfect the shortest
  arrow persists under such an iteration, although it may migrate into
  the equivariant domain for $\varphi^2$, and the length of the arrow
  doubles.  The shortest non-equivariant arrow for $\varphi^2$ is at
  least as long as the equivariant one. Hence
  $\beta_{\min}\big(\varphi^2\big)\geq 2\beta_{\min}(\varphi)$, and
  Theorem \ref{thm:Sh} readily follows from Theorem \ref{thm:main}
  applied to a sequence of period doubling iterations; see Section
  \ref{sec:implications}. The key ingredient in the proof of Theorem
  \ref{thm:main} is the equivariant pair-of-pants product, introduced
  in \cite{Se}, having a very strong non-vanishing property also
  proved therein (see Proposition \ref{prop:non-van}).

  Finally, a few words are due on the requirement in Theorem
  \ref{thm:Sh} and Corollary \ref{cor:Sh} that $\beta(\psi)$ is
  bounded from above. First of all, note that while it would be
  sufficient to only have an upper bound on $\beta_{\min}(\psi)$ where
  $\psi=\varphi^{2^k}$ or, as in \cite[Thm.\ A]{Sh:HZ}, on
  $\beta(\psi)$ where $\psi=\varphi^{p}$, all relevant results proved
  to date are more robust and give an upper bound on $\beta(\psi)$ for
  all $\psi$. (This is the curse (and the blessing) of symplectic
  topological methods in dynamics: they are very robust and general,
  but not particularly discriminating; they often tell the same thing
  about all maps. There are, however, exceptions.)

  The simplest manifold for which such an \emph{a priori} bound is
  established is $\CP^n$ for any coefficient field (suppressed in the
  notation), and the result essentially goes back to \cite{EP}. The
  argument is roughly as follows. (We use here the notation and
  conventions from Section \ref{sec:conv}.) First recall that
\begin{equation}
\label{eq:beta-gamma}
\beta(\psi)\leq \gamma(\psi).
\end{equation}
Here $\gamma(\psi)$ is the $\gamma$-norm of $\psi$ defined, using
cohomology, as
\[
  \gamma(\psi)=-\big(\cf_{\unit}(\psi)+\cf_{\unit}(\psi^{-1})\big),
\]
where $\cf_\alpha(\psi)$ is the spectral invariant associated with a
quantum cohomology class $\alpha\in\HQ(M)$ and $\unit$ is the unit in
the ordinary cohomology $\H(M)$ of $M$. (We suppress the grading in
the cohomology notation when it is irrelevant.)  The upper bound
\eqref{eq:beta-gamma} holds for any closed monotone symplectic
manifold and its proof is similar to the proof in \cite{Us} of the
upper bound for $\beta$ by the Hofer norm, but with continuation maps
replaced by the multiplications by the image of $\unit$ in $\HF(\psi)$
and $\HF\big(\psi^{-1}\big)$. (We refer the reader to \cite{KS} for
some further results along these lines.) Applying the Poincar\'e
duality in Floer cohomology (see \cite{EP}), it is not hard to show
that $\cf_{\unit}(\psi^{-1})=-\cf_{\varpi}(\psi)$ when $N\geq n+1$,
where $\varpi$ is the generator of $\H^{2n}(M)$ and $N$ is the minimal
Chern number of $M^{2n}$. In particular, this is true for $M=\CP^n$
since then $N=n+1$. By construction, for any two classes $\alpha$ and
$\zeta$ in $\HQ(M)$ the spectral invariants satisfy the
Lusternik--Schnirelmann inequality
$\cf_{\alpha*\zeta}(\psi)\geq \cf_\alpha(\psi)$. Thus, from the
identity $\varpi * \zeta =\qq\unit$ where $\zeta$ is the generator of
$\HQ^2(\CP^n)$, we conclude that
$\cf_{\unit}(\psi)\leq\cf_{\varpi}(\psi)\leq\cf_{\unit}(\psi)+\pi$. These
inequalities, combined with \eqref{eq:beta-gamma}, show that
\[
  \beta(\psi)\leq \gamma(\psi)\leq \pi
\]
for any Hamiltonian diffeomorphism $\psi$ of $\CP^n$.

A similar upper bound on $\beta$ holds for all closed monotone
manifolds $M$ such that $\HQ^{\even}(M;\F)$ for some field $\F$ is
semi-simple, i.e., splits as an algebra into a direct sum of
fields. This is \cite[Thm.\ B]{Sh:HZ} and, interestingly, this result
bypasses the upper bound \eqref{eq:beta-gamma} in its original
form. In fact, $\HQ(S^2\times S^2;\Q)$ is semi-simple, but $\gamma$ is
not bounded from above for $S^2\times S^2$; see \cite[Rmk.\ 7]{Sh:HZ}
and also \cite[Thm.\ 6.2.6]{RP}. We are not aware of any algebraic
criteria for an \emph{a priori} bound on the $\gamma$-norm. Nor do we
know how large the class of monotone symplectic manifolds with
semi-simple $\HQ^{\even}(M;\F)$ is. In addition to $\CP^n$ (with any
$\F$), the complex Grassmannians, $S^2\times S^2$, and the one point
blow-up of $\CP^2$ with standard monotone symplectic structures are in
this class when $\charr\F=0$ (see \cite{EP} and references therein);
but $S^2\times S^2$ is not for $\F=\F_2$.

\medskip\noindent{\bf Acknowledgements.} The authors are grateful to
Egor Shelukhin for useful discussions.

\section{Preliminaries}
\label{sec:prelim}
\subsection{Conventions and notation}
\label{sec:conv}
For the reader's convenience we set here our conventions and notation
and briefly recall some basic definitions. The reader may want to
consult this section only as needed.

Throughout this paper, the underlying symplectic manifold $(M,\omega)$
is assumed to be closed and strictly monotone, i.e.,
$[\omega]|_{\pi_2 (M)}=\lambda c_1(TM)|_{\pi_2(M)}\neq 0$ for some
$\lambda>0$. The \emph{minimal Chern number} of $M$ is the positive
generator $N$ of the subgroup
$\left<c_1(TM),\pi_2(M)\right>\subset \Z$ and the \emph{rationality
  constant} is the positive generator $\lambda_0=2N\lambda$ of the
group $\left<\omega,\pi_2(M)\right>\subset \R$.

A \emph{Hamiltonian diffeomorphism} $\varphi=\varphi_H=\varphi_H^1$ is
the time-one map of the time-dependent flow $\varphi^t=\varphi_H^t$ of
a 1-periodic in time Hamiltonian $H\colon S^1\times M\to\R$, where
$S^1=\R/\Z$.  The Hamiltonian vector field $X_H$ of $H$ is defined by
$i_{X_H}\omega=-dH$. In what follows, it will be convenient to view
Hamiltonian diffeomorphisms together with the path $\varphi_H^t$,
$t\in [0,\,1]$, up to homotopy with fixed end points, i.e., as
elements of the universal covering of the group of Hamiltonian
diffeomorphisms.

Let $x\colon S^1\to M$ be a contractible loop. A \emph{capping} of $x$
is an equivalence class of maps $A\colon D^2\to M$ such that
$A|_{S^1}=x$. Two cappings of $x$ are equivalent if the integral of
$\omega$ (or of $c_1(TM)$ since $M$ is strictly monotone) over the
sphere obtained by clutching the cappings is equal to zero. A capped
closed curve $\bar{x}$ is, by definition, a closed curve $x$ equipped
with an equivalence class of cappings, and the presence of capping is
indicated by a bar.

The action of a Hamiltonian $H$ on a capped closed curve
$\bar{x}=(x,A)$ is
\[
  \CA(\bar{x})=-\int_A\omega+\int_{S^1} H_t(x(t))\,dt.
\]
The space of capped closed curves is a covering space of the space of
contractible loops, and the critical points of $\CA_H$ on this space
are exactly the capped 1-periodic orbits of $X_H$.

The $k$-periodic \emph{points} of $\varphi$ are in one-to-one
correspondence with the $k$-periodic \emph{orbits} of $H$, i.e., of
the time-dependent flow $\varphi^t$. Recall also that a $k$-periodic
orbit of $H$ is called \emph{simple} if it is not iterated.  A
$k$-periodic orbit $x$ of $H$ is said to be \emph{non-degenerate} if
the linearized return map $D\varphi^k \colon T_{x(0)}M \to T_{x(0)}M$
has no eigenvalues equal to one.  A Hamiltonian $H$ is non-degenerate
if all its 1-periodic orbits are non-degenerate. We denote the
collection of capped $k$-periodic orbits of $H$ by $\bPP_k(\varphi)$.

Let $\bar{x}$ be a non-degenerate capped periodic orbit.  The
\emph{Conley--Zehnder index} $\mu(\bar{x})\in\Z$ is defined, up to a
sign, as in \cite{Sa,SZ}. In this paper, we normalize $\mu$ so that
$\mu(\bar{x})=n$ when $x$ is a non-degenerate maximum (with trivial
capping) of an autonomous Hamiltonian with small Hessian.

Fixing an almost complex structure, which will be suppressed in the
notation, we denote by $\left(\CF(\varphi), d_{\Fl}\right)$ and
$\HF(\varphi)$ the Floer complex and cohomology of $\varphi$ over
$\F_2=\Z_2$; see, e.g., \cite{MS, Sa}. (Throughout this paper, all
complexes and cohomology groups are over $\F_2$.) The complex
$\CF(\varphi)$ is generated by the capped 1-periodic orbits $\bx$ of
$H$, graded by the Conley--Zehnder index, and filtered by the
action. The filtration level (or the action) of a chain
$\xi\in \CF(\varphi)$ is defined by
\begin{equation}
  \label{eq:filt}
\CA(\xi)=\min \{\CA(\bx_i)\},\textrm{ where } \xi=\sum\bx_i.
\end{equation}
(Note that the filtration depends on $H$, not just on $\varphi$,
making of the notation $\CF(\varphi)$ somewhat misleading.) The
differential $d_{\Fl}$ is the upward Floer differential: it increases
the action and also the index by one. The Floer complex $\CF(\varphi)$
is also a finite-dimensional free module over the Novikov ring
$\Lambda$. There are several choices of $\Lambda$; see, e.g.,
\cite{MS}. For our purposes, it is convenient to take the field of
Laurent series $\F_2((\qq))$ with $|\qq|=2N$ as $\Lambda$. With this
choice, $\Lambda$ naturally acts on $\CF(\varphi)$ by recapping, and
multiplication by $\qq$ corresponds to the recapping by $A\in\pi_2(M)$
with $\left<c_1(TM),A\right>=N$. Furthermore, $\CF(\varphi)$ is a
finite-dimensional vector space over $\Lambda$ with a preferred basis
formed by 1-periodic orbits with arbitrarily fixed capping.

Notationally, it is convenient to equip $\CF(\varphi)$ with a
non-degenerate $\F_2$-valued pairing $\left<\,,\right>$ for which
$\bPP_1(\varphi)$ is an orthogonal basis:
$\left<\bx,\,\by\right>=\delta_{\bx\by}$. Then, essentially by
definition,
\[
d_{\Fl}\bx=\sum \left<d_{\Fl}\bx,\,\by\right>\by.
\]

There is a canonical, grading-preserving isomorphism
$ \HF(\varphi) \stackrel{\cong}{\longrightarrow} \HQ(M)[-n]$ where
$\HQ(M)$ is the quantum cohomology of $M$; see, e.g., \cite{Sa,MS} and
references therein. (Depending on the context, this is the
PSS-isomorphism or the continuation map or a combination of the two.)
The cohomology groups $\HQ(M)$ and $\HF(\varphi)$ are also modules
over a Novikov ring $\Lambda$, and
$\HQ(M)\cong \H(M)\otimes \Lambda\cong \HF(\varphi)$ (as a module).

The Floer complex carries a pairing
\[
  \CF(\varphi)\otimes \CF(\varphi)\to \CF\big(\varphi^2\big)[n]
\]
descending, on the level of cohomology, to the so-called
\emph{pair-of-pants product}
\[
\HF(\varphi)\otimes \HF(\varphi)\to \HF\big(\varphi^2\big)[n],
\]
which we denote by $*$. Thus with our conventions
$|\alpha * \beta|=|\alpha|+|\beta|+n$. In quantum cohomology, this
product corresponds to the \emph{quantum product}, also denoted by
$*$, which makes it into a graded-commutative algebra over $\Lambda$
with unit $\unit$. This product is a deformation (in $\qq$) of the cup
product: $\alpha*\beta=\alpha\cup\beta+O(\qq)$.

\subsection{Equivariant Floer cohomology and the pair-of-pants
  product}
\label{sec:eq-Floer homology}

\subsubsection{Equivariant Floer cohomology: a brief introduction}
The \emph{equivariant Floer cohomology}
$\HF_{\eq}\big(\varphi^2\big)$, introduced in \cite{Se}, is the
homology of a certain complex
$\big(\CF_{\eq}\big(\varphi^2\big),\,d_{\eq}\big)$ called the
\emph{equivariant Floer complex}. As a graded $\F_2$-vector space or
as a $\Lambda$-module,
\[
  \CF_{\eq}\big(\varphi^2\big)=\CF\big(\varphi^2\big)[\hh]
\]
where $|\hh|=1$, and the differential $d_{\eq}$ has the form
\[
  d_{\eq}=d_{\Fl} + \hh\, d_1 + \hh^2\, d_2+\ldots =d_{\Fl}+ O(\hh).
\]
This differential is $\Lambda[h]$-linear and non-strictly
action-increasing. It is roughly speaking defined as follows,
mimicking Borel's construction of the $\Z_2$-equivariant Morse
cohomology.

Fix a family $\tJ$ of 2-periodic in $t$ almost complex structures on
$M$ parametrized by the unit infinite-dimensional sphere
$S^\infty\subset \R^{\infty}$. Here $\R^\infty$ is the direct sum of
infinitely many copies of $\R$, i.e., its elements
$\xi= (\xi_0,\,\xi_1,\,\ldots)$ have only finitely many non-zero
components, and $S^{\infty}=\{\|\xi\|=1\}$ with
$\|\xi\|^2=\sum_k|\xi_k|^2$.  The almost complex structure $\tJ$ is
required to satisfy the symmetry condition $\tJ_{-\xi}=\tJ'_\xi$,
where $\tJ'_{\xi}$ is obtained from $\tJ_\xi$ by the time-shift
$t\mapsto t+1$. Consider the self-indexing quadratic form
$f(\xi)=\sum_kk|\xi_k|^2$ on $S^{\infty}$ and an antipodally symmetric
metric such that the natural equatorial embedding
$S^\infty\to S^\infty$ given by
$(\xi_0,\,\xi_1,\,\ldots)\mapsto (0,\,\xi_0,\,\ldots)$ is an
isometry. (Note also that the pull back of $f$ by this embedding is
$f+1$.)  The almost complex structure $\tJ$ must furthermore be
constant in $\xi$ near the critical points of $f$, invariant under the
equatorial embedding, and satisfy a certain regularity
requirement. Denote by $w^\pm_k$ the critical points of $f$ of index
$k$.

Next, consider the hybrid Morse-Floer complex of $\CA+f$ with respect
to $\tJ$ and the metric on $S^\infty$. This complex has pairs
$(\bx,w^\pm_k)$ with $\bx\in \bPP_2(\varphi)$ as generators and
carries a natural $\Z_2$-action, free on the generators, sending
$(\bx,w^\pm_k)$ to $(\bx',w^{-\pm}_k)$, where $\bx'$ is the time-shift
of $\bx$. It is easy to see that the homology of this hybrid complex
is equal to $\HF (\varphi^2)$. By definition,
$\CF_{\eq}\big(\varphi^2\big)$ is the $\Z_2$-invariant part of this
hybrid complex, where we write $\bx\,\hh^k$ for
$(\bx,w^+_k)+(\bx',w^-_k)$. The fact that the differential is
$\hh$-linear follows from the requirement that $f$ (up to a constant)
and the auxiliary data are invariant under the equatorial embedding.
Thus, in self-explanatory notation,
\[
  d_k\bx=\sum \left<d_k\bx,\,\hh^k\by\right>\by, \textrm{ where
  }\mu(\by)=\mu(\bx)+1-k
\]
and $\left<d_k\bx,\,\hh^k\by\right>$ counts mod 2 the total number of
continuation Floer trajectories from $\bx$ to $\by$ along gradient
lines of $f$ connecting $w^+_0$ to $w^+_k$ and from $\bx$ to $\by'$
along gradient lines of $f$ connecting $w^+_0$ to $w^-_k$. Clearly,
the complex (and hence its cohomology) is filtered by the action $\CA$
in addition to the filtration by $\CA+f$. On the level of (co)chains
the filtration is defined similarly to \eqref{eq:filt}, but with the
powers of $\hh$ ignored:
\[
  \CA(\xi)=\min \{\CA(\bx_i)\},\textrm{ where }
  \xi=\sum\hh^{m_i}\bx_i.
\]
The equivariant complex and the cohomology has natural continuation
properties; see \cite{Se}.

\begin{Example}
  \label{ex:C2small}
  Assume that $\varphi$ is 2-perfect and $\varphi^2$ admits a regular
  1-periodic almost complex structure $J$, i.e., for every pair $\bx$
  and $\by$ of 2-periodic orbits the space of Floer trajectories
  connecting $\bx$ to $\by$ has dimension $\mu(\by)-\mu(\bx)$. In
  particular, this space is empty when $\mu(\by)\leq \mu(\bx)$, except
  when $\by=\bx$ and the space comprises one constant trajectory. Set
  $\tJ=J$ to be a constant (i.e., independent of $\xi$) almost complex
  structure. Then $\tJ$ is also regular and $d_j=0$ for $j\geq 1$
  since continuation trajectories for a constant homotopy are just
  Floer trajectories. Thus, in this case,
  $\HF_{\eq}\big(\varphi^2\big)=\HF(\varphi)[\hh]$ for any interval of
  action. These conditions are met, for instance, when
  $\varphi=\varphi_H$ is generated by a $C^2$-small autonomous
  Hamiltonian $H$. As a consequence, for any $\varphi$ the global
  cohomology $\HF_{\eq}\big(\varphi^2\big)$ is not a particularly
  interesting object: it is simply isomorphic to $\HQ(M)[\hh]$ via the
  equivariant continuation (or the PSS map); see \cite{Wi1,Wi2} for
  further details.
\end{Example}

\begin{Remark}[Polynomials vs.\ Formal Power Series]
  \label{rmk:pol}
  One difference between our definition of
  $\CF_{\eq}\big(\varphi^2\big)$ and the one in \cite{Se} is that
  there $\CF_{\eq}\big(\varphi^2\big)=\CF\big(\varphi^2\big)[[\hh]]$;
  for in that setting the expansion $d_{\eq}\bx=\sum_k\hh^kd_k\bx$ may
  have infinitely many non-vanishing terms. However, as already
  pointed out in \cite[Sect.\ 7]{Se}, when $M$ is strictly monotone
  this expansion is necessarily finite. Indeed, otherwise it would
  involve capped orbits $\by\in\bPP_2(\varphi)$ with arbitrarily small
  index $\mu(\by)$. However, due to monotonicity and since
  $\PP_2(\varphi)$ is finite, such orbits would eventually have action
  strictly smaller than that of $\bx$, which is impossible. This
  difference is essential for our proof as at some point in the
  argument we evaluate the elements of $\CF_{\eq}\big(\varphi^2\big)$
  at $\hh=1$.
\end{Remark}

\subsubsection{Equivariant pair-of-pants product}
For our purposes, the most important feature of the equivariant Floer
complex is that it is the target space of the \emph{equivariant
  pair-of-pants product}, also defined in \cite{Se}. On the level of
complexes this product is a chain map
\[
  \wp\colon \Co\big(\Z_2;\CF(\varphi)\otimes\CF(\varphi)\big)\to
  \CF_{\eq}\big(\varphi^2\big).
\]
The domain of $\wp$ is the group cochain complex
\[
  \Co\big(\Z_2;\CF(\varphi)\otimes\CF(\varphi)\big)
  :=\CF(\varphi)\otimes\CF(\varphi)[\hh]
\]
with the differential
\[
  d_{\Z_2}=d_{\Fl}+\hh(\id+\tau).
\]
Here $\tau$ is the involution $\tau(\bx\otimes\by)=\by\otimes \bx$ and
the first term is induced by the Floer differential on
$\CF(\varphi)\otimes\CF(\varphi)$. Note also that in these formulas
and throughout the paper, all tensor products are over $\F_2$ unless
specified otherwise. Furthermore, we distinguish between $\F_2$ and
$\Z_2$: the former is a field and the latter is a group.

The equivariant pair-of-pants product is bilinear over $\Lambda[\hh]$
and respects the action filtration. In particular, it can also be
defined for a fixed action interval $[a,\,b]$ in the domain and
$[2a,\,2b]$ in the target, but here we will not need the filtered
version of this construction. The map $\wp$ is a perturbation of the
ordinary pair-of-pants product:
\begin{equation}
  \label{eq:wp-popp}
\wp(\bx\otimes\by)=\bx * \by + O(\hh),
\end{equation}
and the $O(\hh)$ part is again polynomial in $\hh$ involving only
finitely many terms (depending on $\bx$ and $\by$).

The cohomology of the domain of $\wp$ is the group cohomology
$\H\big(\Z_2;\CF(\varphi)\otimes\CF(\varphi)\big)$ of $\Z_2$ with
coefficients in $\CF(\varphi)\otimes\CF(\varphi)$.  Thus, on the level
of cohomology, the equivariant pair-of-pants product turns into a
homomorphism
\begin{equation}
  \label{eq:pop-eq-coh}
  \H\big(\Z_2;\CF(\varphi)\otimes\CF(\varphi)\big)\cong
  \H\big(\Z_2;\HF(\varphi)\otimes\HF(\varphi)\big)\to
  \HF_{\eq}\big(\varphi^2\big).
\end{equation}
(The first isomorphism is a consequence of the fact that
$\CF(\varphi)\otimes\CF(\varphi)$ and
$\HF(\varphi)\otimes\HF(\varphi)$ are equivariantly quasi-isomorphic.)
The map \eqref{eq:pop-eq-coh} obviously kills the $\hh$-torsion in the
domain; it is a deformation in $\hh$ of the standard pair-of-pants
product due to \eqref{eq:wp-popp} and is closely related to a quantum
deformation of the Steenrod squares; see \cite{Se,Wi1,Wi2} and also
\cite{CGG:St} for a short introduction. The map \eqref{eq:pop-eq-coh}
is a monomorphism modulo $\hh$-torsion; \cite{Sh:HZ}.  For
symplectically aspherical manifolds, but not in the strictly monotone
case, \eqref{eq:pop-eq-coh} is also onto and hence an isomorphism
modulo $\hh$-torsion i.e., the kernel and the cokernel are torsion
modules; see \cite{Se}.

On the level of complexes $\wp$ has the following extremely important
feature:

\begin{Proposition}[Seidel's non-vanishing theorem; \cite{Se}, Prop.\
  6.7]
  \label{prop:non-van}
  For every $\bx\in\bPP_1(\varphi)$, we have
  \begin{equation}
    \label{eq:non-van}
  \wp(\bx\otimes\bx)=\hh^m\bx^2+\ldots,
\end{equation}
where $\bx^2\in \bPP_2(\varphi)$ is the second iterate of $\bx$ and
$m=2\mu(\bx)-\mu\big(\bx^2\big)+n$ and the dots stand for a sum of
capped orbits with action strictly greater than $2\CA(\bx)$.
\end{Proposition}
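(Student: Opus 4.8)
The plan is to read off the coefficient of the iterate $\bx^2$ in $\wp(\bx\otimes\bx)$ by combining three inputs: the grading of $\wp$, the action filtration, and a localization near the orbit $\bx$ which reduces the computation to a model in which only a single ``trivial'' solution survives. To begin, the exponent $m$ is forced by degrees and requires no separate argument: $\bx\otimes\bx$ lies in degree $2\mu(\bx)$ of $\Co(\Z_2;\CF(\varphi)\otimes\CF(\varphi))$, the map $\wp$ raises degree by $n$ since it deforms the pair-of-pants product and $|\alpha*\beta|=|\alpha|+|\beta|+n$, and $\hh^m\bx^2$ has degree $m+\mu(\bx^2)$ because $|\hh|=1$; hence $m=2\mu(\bx)-\mu(\bx^2)+n$. (That $m\geq 0$ is then automatic, since $\CF_{\eq}(\varphi^2)$ carries only non-negative powers of $\hh$.) So the real content of the proposition is that the lowest-action term of $\wp(\bx\otimes\bx)$ is precisely $\bx^2$, with coefficient $1$.

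The action estimate isolates this term. The map $\wp$ is non-strictly action increasing and sends $\bx\otimes\bx$ into action level $\geq 2\CA(\bx)$ in $\CF_{\eq}(\varphi^2)$, the value $2\CA(\bx)$ being exactly $\CA(\bx^2)$ for the naturally induced (iterated) capping of $\bx^2$; hence every capped $2$-periodic orbit occurring in $\wp(\bx\otimes\bx)$ has action $\geq 2\CA(\bx)$, and those of action strictly larger than $2\CA(\bx)$ are the dots in \eqref{eq:non-van}. A capped orbit $\by$ of action exactly $2\CA(\bx)$ contributes only through an equivariant pair-of-pants solution $u$ of energy $E(u)=\CA(\by)-2\CA(\bx)=0$. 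A zero-energy solution of the domain-dependent, $S^\infty$-parametrized Floer equation is the ``constant'' solution in the moving frame determined by the flow $\varphi^t$; since both input ends converge to $\bx$, it is the trivial pair-of-pants solution over the loop underlying $\bx$, its output end is precisely the second iterate $\bx^2$ with the iterated capping, and no other capped $2$-periodic orbit is reached this way. Therefore $\wp(\bx\otimes\bx)=c\,\hh^{m}\bx^2+(\text{sum of orbits of action strictly above }2\CA(\bx))$ for some $c\in\F_2$, and all that remains is $c=1$.

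That $c=1$ is a local statement at $\bx$. By the filtered form of $\wp$, the part of $\wp(\bx\otimes\bx)$ of action $\leq 2\CA(\bx)$ is computed by the equivariant pair-of-pants product of the local Floer complexes of $\bx$ and (in the non-degenerate setting, where $\mu(\bx^2)$ is defined) of $\bx^2$; these are one-dimensional, concentrated in degrees $\mu(\bx)$ and $\mu(\bx^2)$, so the induced local operation is a degree-$n$ homomorphism of free $\F_2[\hh]$-modules $\F_2[\hh]\to\F_2[\hh]$, i.e.\ either $0$ or multiplication by $\hh^m$. To show it is the latter one takes the auxiliary family $\tJ$ constant near $\bx$, checks that the trivial solution over $\bx$ is transversely cut out once it is coupled to the Borel data (its linearized operator is complex linear, and the deficit in its index is exactly made up by the gradient trajectories of $f$ on $S^\infty$ from $w^+_0$ to the index-$m$ critical points), and then counts these configurations mod $2$: for the top power $\hh^m$ the count is $1$. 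Equivalently, on cohomology $\wp$ is the Floer/quantum deformation of the total $\Z_2$-Steenrod power operation of \cite{Se,Wi1,Wi2} (see also \cite{CGG:St}), whose bottom-action part is the undeformed iterate/cup square and is non-zero; this is the cleanest form in which to quote the non-vanishing, but it rests on the same transversality input. Combining the three paragraphs yields \eqref{eq:non-van}.

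The main obstacle is exactly this last step: not merely identifying the leading term as a multiple of $\bx^2$, but pinning down its $\hh$-weight as $m$ and its coefficient as $1$. This needs transversality for zero-energy solutions coupled with the equatorial $S^\infty$-data together with the ensuing combinatorial count, which is the analytic core of Seidel's construction and the one place where the detailed geometry of the equivariant pair-of-pants moduli spaces is used rather than formal properties of $\wp$.
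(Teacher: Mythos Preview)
The paper does not prove this proposition. It is stated with attribution to Seidel, \cite[Prop.\ 6.7]{Se}, and used throughout as a black box; there is no argument in the paper to compare your proposal against. Your outline is a reasonable high-level sketch of the ideas behind Seidel's original proof (grading fixes $m$; the action filtration isolates the energy-zero contribution; a local transversality argument pins down the coefficient), and you correctly flag that the substantive analytic content lies in the last step. But as far as this paper is concerned, no proof is expected or given.
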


This non-vanishing property points to a stark difference between the
equivariant and non-equivariant pair-of-pants products:
$\bx * \bx=\bx^2+\ldots$ only when $\mu\big(\bx^2\big) =2\mu(\bx)+n$,
i.e., $m=0$ in \eqref{eq:non-van}; cf.\ \cite{CGG:CMD}.

\begin{Remark}
  \label{rmk:char-p}
  A generalization of the equivariant pair-of-pants product to the
  $p$-th iterates $\varphi^p$, where $p$ is a prime, replacing $\Z_2$
  by $\Z_p$ and $\F_2$ by $\F_p$ is constructed in \cite{ShZ}. This
  construction and the analogue of Seidel's non-vanishing theorem for
  the $p$-th iterate plays a crucial role in the original proof of
  Shelukhin's theorem in \cite{Sh:HZ}; cf. Remark \ref{rmk:char-p2}.
\end{Remark}

\section{Floer graphs}
\label{sec:Floer_Gr}

\subsection{Main result}
\label{sec:main}
The key to the statement of our main result is the following
admittedly naive and obvious construction which has been used, at
least on an informal level, for quite some time.

Let $\varphi$ be a non-degenerate Hamiltonian diffeomorphism of a
closed monotone symplectic manifold $M$. Consider the directed graph
$\Gamma(\varphi)$ whose vertices are capped fixed points of $\varphi$,
and two vertices $\bx$ and $\by$ are connected by an arrow (from $\bx$
to $\by$) if and only if $\mu(\by)=\mu(\bx)+1$ and there is an odd
number of Floer trajectories from $\bx$ to $\by$, i.e.,
$\left<d_{\Fl}\bx,\,\by\right>=1$. The length of an arrow is the
difference of actions of $\by$ and $\bx$. We call $\Gamma(\varphi)$
the \emph{Floer graph} of $\varphi$.

When $M$ is strictly monotone as is always assumed in this paper, the
group $\Z$ acts freely on $\Gamma(\varphi)$ by simultaneous recapping,
preserving the arrow length. Sometimes it is convenient to consider
the \emph{reduced Floer graph}
$\tGamma(\varphi):=\Gamma(\varphi)/\Z$. The length of an arrow in
$\tGamma(\varphi)$ is still well-defined. Note that, unless $M$ is
symplectically aspherical, both $\Gamma(\varphi)$ and
$\tGamma(\varphi)$ are infinite, but the latter has finitely many
arrows. In particular, if $d_{\Fl}\neq 0$, there exists a shortest
arrow. Such an arrow might not be unique, although it is unique for a
generic $\varphi$, but obviously all shortest arrows have the same
length.

The \emph{equivariant Floer graph} $\Gamma_{\eq}\big(\varphi^2\big)$
of $\varphi^2$ is defined in a similar fashion. (We are assuming that
$\varphi^2$ is non-degenerate, and hence $\varphi$ is also
non-degenerate.) Its vertices are capped two-periodic orbits of
$\varphi$. The vertices $\bx$ and $\by$ are connected by an arrow if
and only if $\by$ enters $d_{\eq}(\bx)$ with non-zero coefficient. In
other words, now we do not require the index difference to be 1, and
$\bx$ and $\by$ are connected by an arrow if and only if $\bx$ and
$\hh^m\by$, where $m= \mu(\bx)-\mu(\by)+1$, are connected by an odd
number of equivariant Floer trajectories. The length of an arrow is
again the difference of actions. As in the non-equivariant case, the
\emph{reduced equivariant Floer graph}
$\tGamma_{\eq}\big(\varphi^2\big)
:=\tGamma_{\eq}\big(\varphi^2\big)/\Z$ has only finitely many arrows,
and hence the shortest arrows exist.

We note that $\Gamma\big(\varphi^2\big)$ and
$\Gamma_{\eq}\big(\varphi^2\big)$ (and their reduced counterparts)
have the same vertices.  Furthermore, since $d_{\eq}=d_{\Fl}+O(\hh)$,
every arrow in $\Gamma\big(\varphi^2\big)$ is also an arrow in
$\Gamma_{\eq}\big(\varphi^2\big)$, i.e., the equivariant Floer graph
is obtained from its non-equivariant counterpart by adding
arrows. Note that in the process the shortest arrow length can only
get shorter or remain the same. Also, observe that there is a natural
one-to-one map from the vertices of $\tGamma(\varphi)$ to the vertices
of $\tGamma\big(\varphi^2\big)$ sending $\bx$ to $\bx^2$; likewise for
un-reduced graphs. However, even when $\varphi$ is 2-perfect, this map
is not onto unless $M$ is symplectically aspherical.

The main new result of the paper is the following theorem which
relates the Floer graphs for $\varphi$ and its second iterate
$\varphi^2$.

\begin{Theorem}
  \label{thm:main}
  Assume that $\varphi$ is 2-perfect and $\varphi^2$ is
  non-degenerate. Then $\bx$ and $\by$ are connected by one of the
  shortest arrows in $\Gamma(\varphi)$ if and only $\bx^2$ and $\by^2$
  are connected by one of the shortest arrows in
  $\Gamma_{\eq}\big(\varphi^2\big)$.
\end{Theorem}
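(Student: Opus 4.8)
The plan is to run the whole comparison through Seidel's equivariant pair-of-pants product $\wp$ and the non-vanishing Proposition~\ref{prop:non-van}, using the chain-map identity $d_{\eq}\circ\wp=\wp\circ d_{\Z_2}$ to transport the Floer differential of $\varphi$ onto the equivariant differential of $\varphi^2$. Two structural facts set the stage. Since $\varphi$ is $2$-perfect and $\varphi^2$ is non-degenerate, every capped $2$-periodic orbit is a recapping of a square, the squared cappings $\bx^2$ are exactly those in the image of $\bx\mapsto\bx^2$, and $\CA(\bx^2)=2\CA(\bx)$, so the action spectrum of the target is rigidly controlled by that of $\varphi$. And on the diagonal $d_{\Z_2}(\bx\otimes\bx)=d_{\Fl}\bx\otimes\bx+\bx\otimes d_{\Fl}\bx$, because $\hh(\id+\tau)$ kills $\bx\otimes\bx$ over $\F_2$. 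Feeding Seidel's identity $\wp(\bx\otimes\bx)=\hh^{m_\bx}\bx^2+R_\bx$ (with $m_\bx=2\mu(\bx)-\mu(\bx^2)+n$ and $\CA(R_\bx)>2\CA(\bx)$) into the chain-map identity gives
\[
\hh^{m_\bx}\,d_{\eq}\bx^2 \;=\; \wp\!\Big(\textstyle\sum_{\bz:\ \langle d_{\Fl}\bx,\bz\rangle=1}(\bz\otimes\bx+\bx\otimes\bz)\Big)\;+\;d_{\eq}R_\bx .
\]
It is convenient to recall that the length of a shortest arrow of $\Gamma(\varphi)$ equals $\min\{\CA(d_{\Fl}w)-\CA(w):w\notin\ker d_{\Fl}\}$, and likewise for $\Gamma_{\eq}(\varphi^2)$ (for which $d_{\eq}$ is strictly action-increasing between distinct capped orbits, by the usual energy identity).

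For the implication ``$\Rightarrow$'' I would take a shortest arrow $\bx\to\by$ of $\Gamma(\varphi)$, so that $\CA(d_{\Fl}\bx)=\CA(\by)=\CA(\bx)+\beta_{\min}(\varphi)$, and analyze the displayed identity at the critical action level $2\CA(\by)$. Using the energy estimate for $\wp$ (each $\wp(\bz\otimes\bx)$ lives in action $\ge\CA(\bz)+\CA(\bx)$, strictly so when $\bz\ne\bx$, as there is no energy-zero configuration in that case), the propagation of $\CA(R_\bx)>2\CA(\bx)$ under $d_{\eq}$, and $2$-perfectness (which forces every $2$-periodic orbit to be a square of explicitly known action), one aims to show that the lowest-action term of $\hh^{m_\bx}d_{\eq}\bx^2$ sits at action exactly $2\CA(\by)$ and is a nonzero multiple of $\by^2$ with coefficient $\langle d_{\Fl}\bx,\by\rangle=1$ — Seidel's non-vanishing applied also to $\by$ helps identify $\by^2$ as what must occur there. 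This exhibits $\bx^2\to\by^2$ as an arrow of $\Gamma_{\eq}(\varphi^2)$ of length $2(\CA(\by)-\CA(\bx))=2\beta_{\min}(\varphi)$, whence $\beta_{\min}(\Gamma_{\eq}(\varphi^2))\le 2\beta_{\min}(\varphi)$.

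For the converse ``$\Leftarrow$'' together with the matching lower bound $\beta_{\min}(\Gamma_{\eq}(\varphi^2))\ge 2\beta_{\min}(\varphi)$, I would use that $\wp$ is a filtered chain map that is a monomorphism on cohomology modulo $\hh$-torsion, and that $\HF_{\eq}(\varphi^2)\cong\HQ(M)[\hh]$ globally (Example~\ref{ex:C2small}), so that the finite bars of $\CF_{\eq}(\varphi^2)$ are produced by $d_{\eq}$ acting on the $\wp$-images of the diagonal. A short equivariant arrow out of a capped $2$-periodic orbit $\bw$ is, by $2$-perfectness, an arrow out of a recapped square; the diagonal identity above then forces a non-equivariant arrow out of the corresponding $\bx$ of at most half that length (the doubling reflects $\CA(\bx\otimes\bx)=2\CA(\bx)$ against $\CA(d_{\Z_2}(\bx\otimes\bx))=\CA(\bx)+\CA(d_{\Fl}\bx)$), while the non-squared (``odd''-capped) vertices are ruled out at the shortest level by the same filtered estimate. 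Since $\bx\mapsto\bx^2$ is injective on vertices, the two implications together identify the shortest arrows of the two graphs exactly as asserted.

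The hard part will be the action bookkeeping inside the middle step: one must show that at the critical action $2\CA(\by)$ (with $\bx\to\by$ a shortest arrow) neither the off-diagonal terms $\wp(\bz\otimes\bx)$ with $\bz\ne\by$ nor the tail $d_{\eq}R_\bx$ contributes to the coefficient of $\by^2$. This requires the sharp form of the energy/action estimate for $\wp$ rather than the crude $\ge 2\min$ form, the precise list of capped orbits that can occur at a given action (supplied by $2$-perfectness), and, in the presence of coincidences among action values, either a perturbation to a $\varphi$ with simple action spectrum followed by a limiting argument or a direct analysis of the associated-graded differential via the persistent-homology machinery of \cite{UZ}. Seidel's non-vanishing, applied simultaneously to the two ends of the shortest arrow, and the finiteness of the $\hh$-expansion guaranteed by monotonicity (Remark~\ref{rmk:pol}), are the inputs that make this identification possible.
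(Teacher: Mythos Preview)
Your setup is right --- the chain-map identity $d_{\eq}\circ\wp=\wp\circ d_{\Z_2}$ together with Seidel's non-vanishing is indeed the engine --- but the direct coefficient-chase you propose has a real gap at the ``hard part'' you flag. Applying the identity to $\bx\otimes\bx$ expresses $\hh^{m_\bx}d_{\eq}\bx^2$ in terms of the \emph{off-diagonal} products $\wp(\bz\otimes\bx+\bx\otimes\bz)$ (for $\bz$ with $\langle d_{\Fl}\bx,\bz\rangle=1$) plus $d_{\eq}R_\bx$. Neither piece is controlled at the level you need: Proposition~\ref{prop:non-van} says nothing about off-diagonal inputs, so you cannot read off whether $\by^2$ occurs in $\wp(\by\otimes\bx+\bx\otimes\by)$; and $R_\bx$ is only known to have action $>2\CA(\bx)$, so after applying $d_{\eq}$ (whose action-increment is precisely the unknown $\beta_{\min}^{\eq}$) you still cannot exclude a contribution to $\by^2$ at action $2\CA(\by)$. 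Your asserted strict inequality $\CA(\wp(\bz\otimes\bx))>\CA(\bz)+\CA(\bx)$ for $\bz\ne\bx$ would also need its own proof. In short, the identity applied to a single basis vector produces terms you cannot evaluate.

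The paper avoids this by transporting an entire singular decomposition rather than individual orbits. Starting from $\{\alpha_i,\eta_j,\gamma_j\}$ in $\CF(\varphi)$, it sets $\hat\eta_j:=\hh\,\wp(\eta_j\otimes\eta_j)+\wp(\eta_j\otimes\gamma_j)$ and $\hat\gamma_j:=\wp(\gamma_j\otimes\gamma_j)$; the correction term $\wp(\eta_j\otimes\gamma_j)$ is exactly what makes $d_{\eq}\hat\eta_j=\hat\gamma_j$, a \emph{diagonal} $\wp$-value to which Seidel's non-vanishing applies directly --- no off-diagonal terms ever need to be analyzed. Then, since $\CF_{\eq}(\varphi^2)$ is not finite-dimensional over a field, one sets $\hh=1$ (using monotonicity, Remark~\ref{rmk:pol}) to obtain an ungraded complex $\tilde\CC$ over $\Lambda$ in which the images form a genuine singular decomposition; orthogonality follows because the leading terms $\bz_i^2$ are distinct (Example~\ref{exam:orthogonality}), and the dimension count uses $2$-perfectness. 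Proposition~\ref{prop:min_bar} then identifies the shortest arrow. The background assumption of distinct actions is removed at the end by perturbation. If you want to salvage your approach, the missing idea is precisely this: work with $\eta_j,\gamma_j$ rather than $\bx,\by$, and add the cross-term so that $d_{\eq}$ lands back on the diagonal.
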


This theorem is proved in Section \ref{sec:proof} after we recall in
Section \ref{sec:barcodes} a few relevant facts about barcodes.

\begin{Remark}[The role of an almost complex structure]
  The Floer graph of $\varphi$ depends on the choice of an almost
  complex structure $J$, and hence should rather be denoted by
  $\Gamma(\varphi, J)$. Likewise, the equivariant Floer graph depends
  on the parametrized almost complex structure. However, in both
  cases, the collection of shortest arrows is independent of this
  choice. This fact implicitly follows from Theorem \ref{thm:main} or
  can be proved directly by a continuation argument.
\end{Remark}

Note also that Floer graphs are stable under small perturbations of
$\varphi$ and $J$. To be more precise,
$\Gamma(\varphi,J)=\Gamma(\tvarphi,\tJ$) whenever $\tvarphi$ is
sufficiently close to $\varphi$ and $\tJ$ is close to $J$. The same is
true in the equivariant setting.

\subsection{Implications and the proof of Theorem \ref{thm:Sh}}
\label{sec:implications}
Theorem \ref{thm:main} shows that when $\varphi$ is perfect, the
shortest arrow (or, to be more precise, every shortest arrow) persists
from $\varphi$ to $\varphi^2$, although in the process it might move
to the equivariant domain. This happens exactly when the difference of
indices changes: $\mu(\by)-\mu(\bx)=1$ but
$\mu\big(\by^2\big)-\mu\big(\bx^2\big)\neq 1$. Moreover, in this case,
we necessarily have $\mu\big(\by^2\big)-\mu\big(\bx^2\big) < 1$. On
the other hand, if the difference of indices remains equal to one, the
orbits continue to be connected by one of the shortest non-equivariant
arrows.

Denote by $\beta_{\min}(\varphi)=\CA(\by)-\CA(\bx)$ the length of a
shortest arrow.  As follows from Proposition \ref{prop:min_bar},
$\beta_{\min}(\varphi)$ is exactly equal to the shortest bar in the
barcode of $\varphi$. Since every non-equivariant arrow for
$\varphi^2$ is also an equivariant arrow, the shortest equivariant
arrow length $\beta^{\eq}_{\min}\big(\varphi^2\big)$ for $\varphi^2$
does not exceed $\beta_{\min}\big(\varphi^2\big)$, i.e.,
\[
  \beta^{\eq}_{\min}\big(\varphi^2\big)\leq
  \beta_{\min}\big(\varphi^2\big).
\]
In the setting of Theorem \ref{thm:main},
\[
  \beta^{\eq}_{\min}\big(\varphi^2\big)
  =\CA\big(\by^2\big)-\CA\big(\bx^2\big) = 2\beta_{\min}(\varphi).
\]
We conclude that 
\[
  2\beta_{\min}\big(\varphi^{2^k}\big)\leq
  \beta_{\min}\big(\varphi^{2^{k+1}}\big)
\]
as long as the iterates of $\varphi$ remain perfect and
non-degenerate, and hence
\[
  2^k\beta_{\min}(\varphi)\leq \beta_{\min}\big(\varphi^{2^k}\big).
\]
In particular, when $\varphi$ is perfect, the longest finite bar
$\beta(\varphi)$ (and even the shortest bar) in the barcode cannot be
bounded from above for the iterates of $\varphi$. This proves
Theorem~\ref{thm:Sh}.

\begin{Remark}
  An interesting question that arises from Theorem \ref{thm:main} is
  if a shortest arrow could persist in the non-equivariant domain for
  all iterates $\varphi^{2^k}$, assuming that $\varphi$ is perfect. As
  discussed above, this would be the case if and only if
  $\mu\big(\by^{2^k}\big)-\mu\big(\bx^{2^k}\big)=1$ for all $k\in
  \N$. Using a slightly simplified version of the index divisibility
  theorem from \cite{GG:PRvR} one can show that this is impossible
  when $\varphi$ is replaced by a suitable iterate $\varphi^m$. (This
  is non-obvious.) Passing to an iterate is apparently essential
  because there exist pairs of strongly non-degenerate elements $A$
  and $B$ in $\tSp(2n)$ such that
  $\mu\big(A^{2^k}\big)-\mu\big(B^{2^k}\big)=1$ for
  all~$k=0,\,1,\,2,\,\ldots$.
  \end{Remark}

\section{A few words about the shortest bar}
\label{sec:barcodes}

In this section we recall a few facts, well-known to experts, about
persistent homology in the context of Hamiltonian Floer theory. All
results discussed here are contained in, e.g., \cite{UZ}, although in
some instances implicitly and usually in a much more general
setting. A reader sufficiently familiar with the material can easily
skip this section. There are, however, two points the reader might
want to keep in mind. Namely, our emphasis here is on the shortest bar
rather than the longest finite bar (aka the boundary depth) which is
more frequently used in applications to dynamics. Secondly, our sign
conventions are different from those in \cite{UZ} due to the fact that
we are working with Floer cohomology.

Consider the Floer complex $\CC:=\CF(\varphi)$ of a non-degenerate
Hamiltonian diffeomorphism $\varphi$ of a strictly monotone symplectic
manifold, equipped with the standard action filtration. Clearly, $\CC$
is a finite-dimensional vector space over $\Lambda$ and the collection
of 1-periodic orbits of $\varphi$ with fixed capping forms a basis of
$\CC$.

A finite set of vectors $\xi_i\in \CC$ is said to be \emph{orthogonal}
if for any collection of coefficients $\lambda_i\in\Lambda$ we have
\[
  \CA\big(\sum\lambda_i\xi_i\big)=\min \CA(\lambda_i\xi_i).
\]
(Recall that with our conventions, 
\[
  \CA(\xi):=\min \CA(\bx_i)\textrm{ when } \xi=\sum \bx_i;
\]
see \eqref{eq:filt}.) It is not hard to show that an orthogonal set is
necessarily linearly independent over $\Lambda$.

\begin{Example}
  \label{exam:orthogonality}
  Assume that all capped 1-periodic orbits of $\varphi$ have distinct
  actions. Write $\xi_i=\bx_i+\ldots$, where the dots stand for the
  orbits with action strictly greater than $\bx_i$. Then it is easy to
  see that the set $\xi_i$ is orthogonal if and only if the capped
  orbits $\bx_i$ are distinct.
\end{Example}

\begin{Definition}
  \label{def:sing_decomp}
  A basis $\CB=\{\alpha_i, \,\eta_j, \,\gamma_j\}$ of $\CC$ over
  $\Lambda$ is said to be a \emph{singular decomposition} if
\begin{itemize}
\item $d_{\Fl}\alpha_i=0$,
\item $d_{\Fl}\eta_j=\gamma_j$,
\item $\CB$ is orthogonal.
\end{itemize}
\end{Definition}

It is shown in \cite[Sections 2 and 3]{UZ} that $\CC$ admits a
singular decomposition. For the sake of brevity we omit the proof of
this fact. In what follows we will order the pairs
$(\eta_j,\,\gamma_j)$ so that
\begin{equation}
\label{eq:increasing_seq0}
\CA(\gamma_1)-\CA(\eta_1)\leq\CA(\gamma_2)-\CA(\eta_2)\leq\ldots .
\end{equation}
This increasing sequence is usually referred to as the \emph{barcode}
of $\varphi$ (or to be more precise the collection of finite bars).
The maximal entry in the sequence is called the \emph{barcode norm}
$\beta(\varphi)$ or the boundary depth, \cite{Us}. The barcode is
independent of the choice of a singular decomposition (see, e.g.,
\cite{UZ}), but here we do not use this fact. Instead, we need the
following characterization of the shortest bar
$\beta_{\min}=\beta_{\min}(\varphi)$:

\begin{Proposition}[\cite{UZ}]
  \label{prop:min_bar}
  Set
  \[
  \beta_{\min}:=\CA(\gamma_1)-\CA(\eta_1).
  \]
  Then
  \begin{align}
    \label{eq:beta1}
  \beta_{\min} &=\inf \left\{\CA(\by)-\CA(\bx)\mid
                 \left<d_{\Fl}\bx,\,\by\right>=1\right\}\\
    \label{eq:beta2}
               &=\inf \left\{\CA(d_{\Fl}\xi)-
                 \CA(\xi)\mid \xi\in\CC,\, \xi\neq 0\right\}.
  \end{align}
  Here, in the first equality, the infimum is taken over all capped
  1-periodic orbits $\bx$ and $\by$ such that $\by$ enters
  $d_{\Fl}\bx$ with non-zero coefficient and, in the second, over all
  non-zero $\xi\in\CC$. In particular, $\beta_{\min}(\varphi)$ is the
  shortest arrow length in $\Gamma(\varphi)$.
\end{Proposition}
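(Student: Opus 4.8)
The plan is to name the two infima on the right of \eqref{eq:beta1} and \eqref{eq:beta2} --- call them $A$ and $B$ --- and to close the loop of inequalities $B\leq\beta_{\min}$, $B\leq A$, $B\geq\beta_{\min}$, $A\leq\beta_{\min}$. Together these give $\beta_{\min}=A=B$; and since $A$ is by definition the length of a shortest arrow of $\Gamma(\varphi)$, the final sentence of the proposition is then automatic. Throughout one assumes $d_{\Fl}\neq 0$ (otherwise there are no finite bars and nothing to prove), fixes a singular decomposition $\CB=\{\alpha_i,\eta_j,\gamma_j\}$ (see Definition~\ref{def:sing_decomp}) ordered so that \eqref{eq:increasing_seq0} holds, and uses that $d_{\Fl}\gamma_j=d_{\Fl}^2\eta_j=0$, so $\CB$ splits $\CC$ into a $d_{\Fl}$-acyclic part spanned by the pairs $(\eta_j,\gamma_j)$ and the cycles $\alpha_i$.

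Three of the four bounds are soft. For $B\leq\beta_{\min}$ take $\xi=\eta_1$ in \eqref{eq:beta2} and use $d_{\Fl}\eta_1=\gamma_1$. For $B\leq A$, given an arrow $\bx\to\by$ of $\Gamma(\varphi)$, i.e.\ capped orbits with $\langle d_{\Fl}\bx,\by\rangle=1$, take $\xi=\bx$: then $\by$ occurs in $d_{\Fl}\bx\neq 0$, so $\CA(d_{\Fl}\bx)\leq\CA(\by)$ by \eqref{eq:filt} and hence $\CA(d_{\Fl}\xi)-\CA(\xi)\leq\CA(\by)-\CA(\bx)$; taking the infimum over arrows gives $B\leq A$. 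For $A\leq\beta_{\min}$ one exhibits a short arrow directly: write $\eta_1=\sum_k\bx_k$ and $\gamma_1=\sum_l\by_l$ as in \eqref{eq:filt}, so $\CA(\bx_k)\geq\CA(\eta_1)$ for all $k$, and pick $l_0$ with $\CA(\by_{l_0})=\CA(\gamma_1)$. Since $\gamma_1=d_{\Fl}\eta_1=\sum_k d_{\Fl}\bx_k$ and $\by_{l_0}$ occurs in $\gamma_1$ with coefficient $1$, it occurs in $d_{\Fl}\bx_k$ for an odd number of indices $k$; fixing one such $k$ we get an arrow $\bx_k\to\by_{l_0}$ (the index condition $\mu(\by_{l_0})=\mu(\bx_k)+1$ is automatic, as $d_{\Fl}$ raises the index by one) of length $\CA(\by_{l_0})-\CA(\bx_k)=\CA(\gamma_1)-\CA(\bx_k)\leq\CA(\gamma_1)-\CA(\eta_1)=\beta_{\min}$.

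The substantive step --- and the one I expect to be the main obstacle --- is $B\geq\beta_{\min}$. Take any $\xi\neq 0$ with $d_{\Fl}\xi\neq 0$ and expand it over $\Lambda$ in the singular basis, $\xi=\sum a_i\alpha_i+\sum b_j\eta_j+\sum c_j\gamma_j$. By $\Lambda$-linearity of $d_{\Fl}$ and the defining relations of $\CB$ one has $d_{\Fl}\xi=\sum_j b_j\gamma_j$, which is nonzero, so $b_j\neq 0$ for some $j$; since $\{\gamma_j\}$ is part of the orthogonal set $\CB$, the action $\CA(d_{\Fl}\xi)=\min\{\CA(b_j\gamma_j)\mid b_j\neq 0\}$ is attained at some $j^*$. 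Orthogonality of $\CB$ applied to $\xi$ itself gives $\CA(\xi)\leq\CA(b_{j^*}\eta_{j^*})$, so $\CA(d_{\Fl}\xi)-\CA(\xi)\geq\CA(b_{j^*}\gamma_{j^*})-\CA(b_{j^*}\eta_{j^*})$. The crux of the estimate is then that multiplication by a fixed nonzero element of the Novikov field $\Lambda$ shifts $\CA$ by an amount that does not depend on the vector multiplied, so the right-hand side equals $\CA(\gamma_{j^*})-\CA(\eta_{j^*})$, which by the ordering \eqref{eq:increasing_seq0} is $\geq\CA(\gamma_1)-\CA(\eta_1)=\beta_{\min}$. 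Thus $\CA(d_{\Fl}\xi)-\CA(\xi)\geq\beta_{\min}$ for every such $\xi$, giving $B\geq\beta_{\min}$, and combining the four inequalities completes the proof. The genuinely delicate points are purely bookkeeping: keeping the interaction between orthogonality and the $\Lambda$-module structure straight, and respecting the cohomological sign convention under which $d_{\Fl}$ is action-increasing (so that all the differences above are $\geq 0$); with those in hand the argument is a routine extraction from \cite{UZ}.
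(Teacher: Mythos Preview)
Your proof is correct and follows essentially the same approach as the paper's: both rely on the singular decomposition and orthogonality to establish $B\geq\beta_{\min}$ as the only substantive step, with the remaining inequalities being soft. The only cosmetic difference is organizational---the paper first shows $A=B$ directly (applying your ``find an arrow'' argument to an arbitrary $\xi$ rather than just $\eta_1$) and then $B=\beta_{\min}$, whereas you close the loop via $A\leq\beta_{\min}$ instead of $A\leq B$.
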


Note that the infimums in \eqref{eq:beta1} and \eqref{eq:beta2} are
actually attained and thus can be replaced by minima, and that the
proposition can be thought of as an analogue for $\CC$ of the
Courant-Fischer minimax theorem giving a variational interpretation of
the eigenvalues of a quadratic form. For the sake of completeness we
include a proof of Proposition \ref{prop:min_bar}.

\begin{proof} Let us denote the right-hand sides in \eqref{eq:beta1}
  and \eqref{eq:beta1} by $\beta'_{\min}$ and, respectively,
  $\beta''_{\min}$. We claim that
  $\beta'_{\min}=\beta''_{\min}$. Indeed, setting $\xi=\bx$, in
  \eqref{eq:beta2}, it is easy to see that
  $\beta''_{\min}\leq \beta'_{\min}$. On the other hand, writing
  $\xi=\bx_1+\bx_2+\ldots$ in the order of increasing action and
  $d_{\Fl}\xi=\sum d_{\Fl}\bx_i=\by+\ldots$, we observe that
  $\left<\by,\,d_{\Fl}\bx_i\right>=1$ for some $i$. Then
  \begin{align*}
  \CA(d_{\Fl}\xi)-\CA(\xi) &= \CA(\by)-\CA(\bx_1)\\
                           &\geq \CA(\by)-\CA(\bx_i)\\
                           &\geq \beta'_{\min},
  \end{align*}
  and thus $\beta''_{\min}\geq \beta'_{\min}$.

  Next, clearly, $\beta_{\min}\geq \beta''_{\min}$. Therefore, it
  remains to show that $\beta_{\min}\leq \beta''_{\min}$. To this end,
  let us decompose $\xi$ in the basis $\CB$ over $\Lambda$:
  \[
    \xi=\sum
    \lambda_j\eta_j+\sum\lambda'_j\gamma_j+\sum\lambda''_i\alpha_i.
  \]
  Then
  \[
    d_{\Fl}\xi=\sum\lambda_j\gamma_j.
  \]
  By orthogonality,
  \[
    \CA(d_{\Fl}\xi)=\min\CA(\lambda_j\gamma_j)=\CA(\lambda_k\gamma_k)
  \]
  for some $k$, and, again by orthogonality,
  \[
    \CA(\xi) \leq \min \CA(\lambda_j\eta_j) \leq \CA(\lambda_k\eta_k).
  \]
  Therefore,
  \begin{align*}
    \CA(d_{\Fl}\xi)-\CA(\xi)
    &\geq \CA(\lambda_k\gamma_k)-\CA(\lambda_k\eta_k)\\
    &= \CA(\gamma_k)-\CA(\eta_k)\\
    &\geq \CA(\gamma_1)-\CA(\eta_1)=\beta_{\min}.
  \end{align*}
  As a consequence, $\beta_{\min}\leq \beta''_{\min}$, which finishes
  the proof of the proposition.
\end{proof}

\begin{Remark}
  \label{rmk:algebra}
  In conclusion, we point out that all results in this section are
  purely algebraic and extend in a straightforward way to any
  un-graded finite-dimensional complex over $\Lambda$ with an ``action
  filtration'' having expected properties; see \cite{UZ}.
\end{Remark}

\section{Proof of theorem \ref{thm:main} and further remarks}

\subsection{Proof of theorem \ref{thm:main}}
\label{sec:proof}
We begin by proving the theorem under the additional background
assumption that \emph{all actions and action differences for $\varphi$
  and $\varphi^2$ are distinct modulo the rationality constant
  $\lambda_0$}. Then, in the last step of the proof, we will show how
to remove this extra assumption. Note that in particular this
assumption guarantees that the shortest arrow is unique for
$\Gamma(\varphi)$ and $\Gamma_{\eq}\big(\varphi^2\big)$.

\begin{Remark} It is worth pointing out that while this background
  assumption is satisfied $C^\infty$-generically, it is not quite
  innocuous in the context of pseudo-rotations or perfect Hamiltonian
  diffeomorphisms. Indeed, in this case one can expect certain
  ``resonance relations'' between actions or actions and mean indices
  to hold; see \cite{GK,GG:generic}.
\end{Remark}

The proof is carried out in three steps.

\medskip\noindent\emph{Step 1: The shortest arrow for $\varphi$.}  In
this step we simply apply the machinery from Section
\ref{sec:barcodes} to $\CF(\varphi)$.  Let
$\CB=\{\alpha_i, \,\eta_j, \,\gamma_j\}$ be a singular decomposition
for $\CF(\varphi)$ over $\Lambda$; see Definition
  \ref{def:sing_decomp}.  Due to the background assumption, the
inequalities in \eqref{eq:increasing_seq0} are strict:
\begin{equation}
\label{eq:increasing_seq}
\CA(\gamma_1)-\CA(\eta_1)<\CA(\gamma_2)-\CA(\eta_2)<\ldots .
\end{equation}

Let us write
\[
  \gamma_1=\by_*+\ldots \textrm{ and } \eta_1=\bx_*+\ldots ,
\]
where dots stand for higher action terms, and $\bx_*$ and $\by_*$ are
unique by the background assumption. Then, by definition,
\[
  \CA(\gamma_1)=\CA(\by_*) \textrm{ and } \CA(\eta_1)=\CA(\bx_*),
\]
and hence
\[
  \beta_{\min}:=\CA(\gamma_1)-\CA(\eta_1)=\CA(\by_*)-\CA(\bx_*).
\]
We claim that
\begin{equation}
  \label{eq:x*y*}
  \left<d_{\Fl}\bx_*,\by_*\right>=1.
\end{equation}
Indeed, $\left<\,d_{\Fl}\bx, \by_*\right>=1$ for some $\bx$ entering
$\eta_1$. Then
\[
  \beta_{\min}=\CA(\by_*)-\CA(\bx_*)\geq \CA(\by_*)-\CA(\bx)\geq
  \beta_{\min}.
\]
It follows that the first inequality is in fact an equality and
$\bx=\bx_*$ due to the background assumption.

Therefore, by Proposition \ref{prop:min_bar} and \eqref{eq:x*y*},
$\bx_*$ and $\by_*$ are connected by the shortest arrow in
$\Gamma(\varphi)$.

\medskip\noindent\emph{Step 2: The shortest arrow for $\varphi^2$.} In
the previous step we have shown that $\bx_*$ and $\by_*$ are connected
by the shortest arrow in $\CF(\varphi)$. Our goal now is to prove the
following key fact.

\begin{Lemma}
  \label{lemma:key}
  The iterated orbits $\bx_*^2$ and $\by_*^2$ are connected by the
  shortest arrow in $\Gamma_{\eq}\big(\varphi^2\big)$.
\end{Lemma}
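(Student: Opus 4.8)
The plan is to transport the shortest arrow $\bx_*\to\by_*$ for $\varphi$ to an equivariant arrow $\bx_*^2\to\by_*^2$ for $\varphi^2$ of length exactly $2\beta_{\min}(\varphi)$, and then to show no shorter equivariant arrow can exist. For the first half, I would feed $\bx_*$ and $\by_*$ into Seidel's non-vanishing theorem (Proposition \ref{prop:non-van}): since $\wp$ is a chain map from $\big(\CF(\varphi)\otimes\CF(\varphi)[\hh],d_{\Z_2}\big)$ to $\big(\CF_{\eq}(\varphi^2),d_{\eq}\big)$, I would apply $\wp$ to the relation $d_{\Z_2}(\bx_*\otimes\bx_*)=d_{\Fl}\bx_*\otimes\bx_* + \bx_*\otimes d_{\Fl}\bx_* + \hh(\id+\tau)(\bx_*\otimes\bx_*) = d_{\Fl}\bx_*\otimes\bx_* + \bx_*\otimes d_{\Fl}\bx_*$ (the $\hh$-term dies because $\bx_*\otimes\bx_*$ is $\tau$-symmetric). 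Hence $d_{\eq}\wp(\bx_*\otimes\bx_*) = \wp(d_{\Fl}\bx_*\otimes\bx_*) + \wp(\bx_*\otimes d_{\Fl}\bx_*)$. Now $\wp(\bx_*\otimes\bx_*)=\hh^m\bx_*^2+(\text{action}>2\CA(\bx_*))$ by Proposition \ref{prop:non-van}, and $\left<d_{\Fl}\bx_*,\by_*\right>=1$ means $d_{\Fl}\bx_* = \by_* + (\text{action}>\CA(\by_*))$, so the right-hand side is $\hh^{m'}\by_*^2$-ish terms plus strictly higher action. Tracking the $\hh$-degree and the action filtration carefully, the lowest-action term of $d_{\eq}\wp(\bx_*\otimes\bx_*)$ involves $\by_*^2$ with action $2\CA(\by_*)$; comparing with the $\bx_*^2$-term at action $2\CA(\bx_*)$, this exhibits an equivariant arrow $\bx_*^2\to\by_*^2$ of length $2(\CA(\by_*)-\CA(\bx_*))=2\beta_{\min}(\varphi)$. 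So $\beta^{\eq}_{\min}\big(\varphi^2\big)\le 2\beta_{\min}(\varphi)$.

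For the reverse inequality, I would run the barcode machinery of Section \ref{sec:barcodes} on the equivariant complex. The key observation is that Remark \ref{rmk:algebra} and Remark \ref{rmk:pol} let me treat $\big(\CF_{\eq}(\varphi^2),d_{\eq}\big)$ as an ungraded filtered complex over $\Lambda$ (evaluating at $\hh=1$, or working over $\Lambda[\hh]$ and using that the expansion is finite), so Proposition \ref{prop:min_bar} gives $\beta^{\eq}_{\min}\big(\varphi^2\big)=\inf\{\CA(d_{\eq}\xi)-\CA(\xi)\mid \xi\neq 0\}$. To bound this below by $2\beta_{\min}(\varphi)$, I would use that $\varphi$ is 2-perfect: the 2-periodic orbits of $\varphi$ are exactly the iterates $\bx^2$ for $\bx\in\PP_1(\varphi)$, and by the monotonicity/index bookkeeping of Remark \ref{rmk:pol} the action spectrum of $\varphi^2$ is the doubled action spectrum of $\varphi$. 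Then any action difference $\CA(\by^2)-\CA(\bx^2)=2(\CA(\by)-\CA(\bx))$ realized by a $d_{\eq}$-arrow, combined with the fact (Example \ref{ex:C2small}) that globally $\HF_{\eq}(\varphi^2)\cong\HQ(M)[\hh]$ so the equivariant homology matches $\HF(\varphi)[\hh]$, forces the shortest equivariant arrow to come from a shortest arrow of $\varphi$ after doubling — i.e., $\beta^{\eq}_{\min}(\varphi^2)\ge 2\beta_{\min}(\varphi)$.

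Combining the two halves gives $\beta^{\eq}_{\min}\big(\varphi^2\big)=2\beta_{\min}(\varphi)=2(\CA(\by_*)-\CA(\bx_*))=\CA(\by_*^2)-\CA(\bx_*^2)$, and by the background uniqueness assumption the unique shortest equivariant arrow must be the one connecting $\bx_*^2$ and $\by_*^2$. The main obstacle, I expect, is the second (lower bound) half: controlling the full equivariant differential $d_{\eq}=d_{\Fl}+\hh d_1+\cdots$ and ruling out that the higher $\hh$-terms $d_k$ create a genuinely shorter arrow than anything visible at the $d_{\Fl}$ level. Here I would lean on the non-vanishing of the equivariant product together with the fact that $\wp$ is a monomorphism modulo $\hh$-torsion (stated after \eqref{eq:pop-eq-coh}, from \cite{Sh:HZ}), which pins down the relevant part of the equivariant barcode in terms of the product of the $\varphi$-barcode with itself, so that its shortest bar is exactly the doubled shortest bar of $\varphi$. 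Making this last comparison precise — matching bars of the equivariant complex with $\Z_2$-group-cohomology classes of $\CF(\varphi)\otimes\CF(\varphi)$ under $\wp$ and reading off lengths — is the technical heart of the argument.
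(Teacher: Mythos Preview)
Your overall strategy --- push an arrow through $\wp$, invoke Seidel's non-vanishing, and then work in the $\hh=1$ collapse to apply the barcode machinery --- matches the paper's. But there is a genuine gap in the first half, and it is exactly the point where the paper's argument diverges from yours.

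Seidel's non-vanishing (Proposition~\ref{prop:non-van}) controls only \emph{diagonal} tensors: it tells you the leading term of $\wp(\bz\otimes\bz)$. When you compute
\[
d_{\eq}\,\wp(\bx_*\otimes\bx_*)=\wp\big(d_{\Fl}\bx_*\otimes\bx_*+\bx_*\otimes d_{\Fl}\bx_*\big)=\wp(\by_*\otimes\bx_*)+\wp(\bx_*\otimes\by_*)+\ldots,
\]
the leading terms on the right are \emph{off-diagonal}, and you have no information about $\wp(\by_*\otimes\bx_*)$ or $\wp(\bx_*\otimes\by_*)$ beyond the action bound $\geq \CA(\bx_*)+\CA(\by_*)$. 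In particular, there is no reason $\by_*^2$ should appear at all, and even the weaker inequality $\CA\big(d_{\eq}\wp(\bx_*\otimes\bx_*)\big)\leq 2\CA(\by_*)$ is unjustified: the off-diagonal contributions could cancel or land at higher action. So this computation does not produce the arrow $\bx_*^2\to\by_*^2$, nor even the bound $\beta^{\eq}_{\min}(\varphi^2)\leq 2\beta_{\min}(\varphi)$.

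The paper fixes this by a small but essential trick: instead of feeding $\bx_*\otimes\bx_*$ (or $\eta_1\otimes\eta_1$) into $\wp$, it uses the corrected chain
\[
\heta_j:=\hh\,\wp(\eta_j\otimes\eta_j)+\wp(\eta_j\otimes\gamma_j),
\]
which is designed so that the off-diagonal terms cancel in $d_{\Z_2}$ and one gets $d_{\eq}\heta_j=\wp(\gamma_j\otimes\gamma_j)=:\hgamma_j$, a diagonal tensor to which Proposition~\ref{prop:non-van} \emph{does} apply. Doing this for every $j$ (and setting $\halpha_i:=\wp(\alpha_i\otimes\alpha_i)$) and then evaluating at $\hh=1$ produces a full singular decomposition of the collapsed complex $\tCC$, whose bars are exactly the doubled bars of $\varphi$. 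This single construction gives both inequalities at once, so there is no separate lower-bound step; your second half (which, as you acknowledge, is the weak point --- global statements like $\HF_{\eq}(\varphi^2)\cong\HQ(M)[\hh]$ or that $\wp$ is a monomorphism modulo $\hh$-torsion do not control the action filtration) becomes unnecessary.
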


Since under the background assumption the shortest arrows in
$\tGamma(\varphi)$ and $\Gamma_{\eq}\big(\varphi^2\big)$ are unique,
this will establish the theorem.

\begin{proof}[Proof of Lemma \ref{lemma:key}] In the notation from
  Section \ref{sec:eq-Floer homology}, set
  \begin{align*}
    \halpha_i & = \wp(\alpha_i\otimes\alpha_i),\\
    \heta_j   & =
                \hh\wp(\eta_j\otimes\eta_j) +
                \wp(\eta_j\otimes\gamma_j),\\
    \hgamma_j & = \wp(\gamma_j\otimes\gamma_j).
  \end{align*}
  Then, by Seidel's non-vanishing theorem (Proposition
  \ref{prop:non-van}),
  \[
    \heta_1=\hh^m\bx_*^2+\ldots\textrm{ and }
    \hgamma_1=\hh^{m'}\by_*^2+\ldots
  \]
  for some $m\geq 0$ and $m'\geq 0$, where the dots again stand for
  higher action terms.

  Since $\wp$ is a chain map, i.e.,
  $\wp\circ d_{\Z_2}=d_{\eq}\circ\wp$, we have
  \[
    d_{\eq}\halpha_i=0
  \]
  and
  \begin{align*}
  d_{\eq}\heta_j
    &=\hh\wp(\gamma_j\otimes\eta_j)+\hh\wp(\eta_j\otimes\gamma_j)\\
    &\quad + \wp(\hh\eta_j\otimes\gamma_j+\hh\gamma_j\otimes\eta_j)\\
    &\quad + \wp(\gamma_j\otimes\gamma_j)\\
    &=\hgamma_j.
  \end{align*}

  This indicates that the collection
  $\hCB:=\{\halpha_i,\,\heta_j,\,\hgamma_j\}$ can be thought of as a
  singular decomposition of $\CF_{\eq}\big(\varphi^2\big)$ with the
  minimal bar given by
  \[
    \CA(\hgamma_1)-\CA(\heta_1)
    =\CA\big(\by_*^2\big)-\CA\big(\bx_*^2\big),
  \]
  and, arguing similarly to Step 1, we should be able to show that
  $\bx_*^2$ and $\by_*^2$ are connected by the shortest arrow. A minor
  technical difficulty that arises at this stage is that
  $\CF_{\eq}\big(\varphi^2\big)$ does not fit in with the algebraic
  framework of Section \ref{sec:barcodes} or \cite{UZ}. Namely,
  $\CF_{\eq}\big(\varphi^2\big)$ is not finite-dimensional over
  $\Lambda$; it is finite-dimensional over $\Lambda[\hh]$, but the
  latter is not a field. We circumvent this difficulty by a trick
  which essentially amounts to setting $\hh=1$. (This is the point
  where our choice of working with polynomials in $\hh$ rather than
  formal power series as in \cite{Se} is essential; cf.\
  Remark~\ref{rmk:pol}.)

  Consider the ungraded complex $\tCC$ defined as follows:
  $\tCC:=\CF\big(\varphi^2\big)\subset \CF_{\eq}\big(\varphi^2\big)$
  as a vector space over $\Lambda$ with the differential
  $\td\alpha:=d_{\eq}\alpha|_{\hh=1}$ for $\alpha\in \tCC$. Since
  $d_{\eq}$ is $\hh$-linear, we have $\td^2=0$. More formally, $\tCC$
  is the quotient complex in the short exact sequence of ungraded
  complexes
  \[
    0\longrightarrow
    \CF_{\eq}\big(\varphi^2\big)\stackrel{1+\hh}{\longrightarrow}
    \CF_{\eq}\big(\varphi^2\big)
    \stackrel{\pi}{\longrightarrow}\tCC\longrightarrow 0
  \]
  over $\Lambda$, where $\pi$ is the $\hh=1$ evaluation map.

\begin{Remark}
    \label{rmk:HtC}
    This exact sequence, for any action interval, gives rise to the
    exact triangle in Floer cohomology relating $\H(\tCC)$ and
    $\HF_{\eq}\big(\varphi^2\big)$ via multiplication by $1+\hh$. As
    any map of the form $\id + O(\hh)$, this multiplication map in
    Floer cohomology is one-to-one, and thus
    \[
      \H(\tCC)\cong \HF_{\eq}\big(\varphi^2\big)/(1+\hh)
      \HF_{\eq}\big(\varphi^2\big),
    \]
    and hence
    $\dim_{\F_2}\H(\tCC)=\rk_{\F_2[\hh]}\HF_{\eq}\big(\varphi^2\big)$,
    for any action interval. For global cohomology,
    $\H(\tCC)\cong \HF\big(\varphi^2\big)$ as ungraded
    $\Lambda$-modules by the continuation argument and Example
    \ref{ex:C2small}.
  \end{Remark}
  
  Since, by construction, $\tCC$ is a finite-dimensional vector space
  over $\Lambda$, now the machinery from \cite{UZ} applies literally;
  see Remark \ref{rmk:algebra}. In self-explanatory notation,
  \[
    \left<d_{\eq} \bz,\hh^m\bz'\right>\neq 0 \textrm{ where }
    m=\mu(\bz)-\mu(\bz')+1 \Longleftrightarrow \big<\td
    \bz,\bz'\big>\neq 0
  \]
  for $\bz$ and $\bz'$ in $\bPP_2(\varphi)$. Furthermore, we can also
  form the Floer graph for $\tCC$ and this graph is identical to the
  equivariant Floer graph $\Gamma_{\eq}\big(\varphi^2\big)$.
  
  \begin{Claim}
    \label{claim}
    The subset $\tCB:=\pi(\hCB)$ in $\tCC$ formed by
    $\talpha_i:=\pi(\halpha_i)$ and $\teta_j:=\pi(\heta_j)$ and
    $\tgamma_j:=\pi(\hgamma_j)$ is a singular decomposition for
    $\tCC$.
  \end{Claim}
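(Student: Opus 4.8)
The plan is to check, one by one, the three conditions of Definition~\ref{def:sing_decomp} for $\tCB$ inside the ungraded complex $\tCC$; recall that $\tCC$ is finite-dimensional over $\Lambda$ and therefore fits the algebraic framework of \cite{UZ} (see Remark~\ref{rmk:algebra}). The differential relations come for free: since $\wp$ is a chain map, the identities $d_{\eq}\halpha_i=0$ and $d_{\eq}\heta_j=\hgamma_j$ established just above are preserved by the chain map $\pi$ (evaluation at $\hh=1$), so $\td\talpha_i=0$ and $\td\teta_j=\tgamma_j$ in $\tCC$.

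The main work is to pin down, for each element of $\hCB$, its leading (lowest-action) term. I would start by writing the singular decomposition $\CB$ of $\CF(\varphi)$ as $\alpha_i=\bar{a}_i+\ldots$, $\eta_j=\bx_j+\ldots$ and $\gamma_j=\by_j+\ldots$, where each ellipsis is a sum of capped $1$-periodic orbits of strictly larger action; by the background assumption the leading orbits $\bar{a}_i,\bx_j,\by_j$ are uniquely determined (with $\bx_1=\bx_*$ and $\by_1=\by_*$ as in Step~1), by orthogonality of $\CB$ and Example~\ref{exam:orthogonality} they are pairwise distinct, and $\CA(\gamma_j)>\CA(\eta_j)$ for all $j$ by positivity of Floer energy. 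Now $\wp$ is $\Lambda$-bilinear and non-strictly doubles the action filtration, and Seidel's non-vanishing theorem (Proposition~\ref{prop:non-van}) says that on a diagonal input $\wp(\bz\otimes\bz)$ equals a power of $\hh$ times $\bz^2$ plus terms of action strictly above $2\CA(\bz)$. Expanding $\wp(\alpha_i\otimes\alpha_i)$, $\wp(\eta_j\otimes\eta_j)$, $\wp(\eta_j\otimes\gamma_j)$ and $\wp(\gamma_j\otimes\gamma_j)$ bilinearly in these leading-orbit decompositions, every off-diagonal contribution has action strictly exceeding twice the action of the relevant leading orbit (for the mixed product one uses $\CA(\eta_j)+\CA(\gamma_j)>2\CA(\eta_j)$), while the diagonal contributions are governed by Proposition~\ref{prop:non-van}; since $\CA(\bz^2)=2\CA(\bz)$ for the iterated capping, this shows that $\halpha_i$, $\heta_j$ and $\hgamma_j$ each carry a \emph{unique} minimal-action term, namely a power of $\hh$ times $\bar{a}_i^2$, $\bx_j^2$ and $\by_j^2$, sitting at actions $2\CA(\alpha_i)$, $2\CA(\eta_j)$ and $2\CA(\gamma_j)$ respectively. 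Uniqueness is precisely what guarantees that this term is not cancelled upon setting $\hh=1$ (the map $\pi$ respects the action filtration), so in $\tCC$ we get $\talpha_i=\bar{a}_i^2+\ldots$, $\teta_j=\bx_j^2+\ldots$ and $\tgamma_j=\by_j^2+\ldots$, again with ellipses of strictly larger action.

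Orthogonality and the basis property then fall out. Iteration $\bz\mapsto\bz^2$ is injective on capped orbits, so the leading orbits $\{\bar{a}_i^2\}\cup\{\bx_j^2\}\cup\{\by_j^2\}$ of the elements of $\tCB$ are pairwise distinct capped $2$-periodic orbits of $\varphi$; since all capped $2$-periodic orbits have distinct actions under the background assumption, Example~\ref{exam:orthogonality} applied inside $\tCC$ shows $\tCB$ is orthogonal, hence $\Lambda$-linearly independent. That it is a basis I would get from a dimension count: $|\tCB|=|\hCB|=|\CB|=\dim_\Lambda\CF(\varphi)=|\PP_1(\varphi)|=|\PP_2(\varphi)|=\dim_\Lambda\tCC$, using $2$-perfectness of $\varphi$ for the penultimate equality and the distinctness of leading orbits to guarantee $|\tCB|=|\hCB|$. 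This establishes all three conditions of Definition~\ref{def:sing_decomp}. I expect the main obstacle to be exactly the bookkeeping in the second paragraph: tracking the minimal-action term through the bilinear, action-doubling, $\hh$-perturbed map $\wp$, so that bilinear cross-terms, the $O(\hh)$ tail in \eqref{eq:wp-popp}, and any coincidences among capped orbits after the substitution $\hh=1$ cannot spoil the leading term supplied by Proposition~\ref{prop:non-van}; everything there is controlled by the strict action inequalities from the background assumption together with the fact that $\wp$ respects the action filtration.
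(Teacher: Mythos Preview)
Your proposal is correct and follows essentially the same route as the paper: the differential relations come from $\pi$ being a chain map, the leading terms $\bar a_i^2,\bx_j^2,\by_j^2$ are extracted via Seidel's non-vanishing theorem, orthogonality follows from Example~\ref{exam:orthogonality} once those leading orbits are seen to be pairwise distinct, and the basis property is the dimension count using $2$-perfectness. Your treatment is somewhat more explicit than the paper's in justifying why the leading term survives the evaluation $\hh=1$ (no cancellation among different $\hh$-powers of the same capped orbit), but this is a matter of detail rather than a different argument.
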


  Putting aside the proof of the claim, let us first show how Lemma
  \ref{lemma:key} follows from it.  Observe that
  \begin{equation}
    \label{eq:2action}
    \CA(\tgamma_j)-\CA(\teta_j)=2\big(\CA( \gamma_j)-\CA(\eta_j) \big).
  \end{equation}
  Indeed, set
  \begin{align*}
    \eta_j &=\bx_j+\ldots,\\
    \gamma_j &=\by_j+\ldots,
  \end{align*}            
  where as usual the dots stand for strictly higher action
  terms. (Thus $\bx_*=\bx_1$ and $\by_*=\by_1$.) By Seidel's
  non-vanishing theorem (Proposition \ref{prop:non-van}), we have
    \begin{align*}
    \heta_j &=\hh^{m_j}\bx_j^2+\ldots,\\
    \hgamma_j &=\hh^{m'_j}\by_j^2+\ldots
    \end{align*}
    for some $m_j\geq 0$ and $m'_j\geq 0$, and hence
    \begin{align*}
    \teta_j &=\bx_j^2+\ldots,\\
    \tgamma_j &=\by_j^2+\ldots.
    \end{align*}
    Therefore,
     \[
       \CA(\tgamma_j)-\CA(\teta_j) =
       \CA\big(\by_j^2\big)-\CA\big(\bx_j^2\big) =
       2\big(\CA(\by_j)-\CA(\bx_j)\big) = 2\big(\CA(
       \gamma_j)-\CA(\eta_j) \big),
     \]
     which proves \eqref{eq:2action}.

     In particular, similarly to \eqref{eq:increasing_seq}, we have
    \[
      \CA(\tgamma_1)-\CA(\teta_1)<\CA(\tgamma_2)-\CA(\teta_2)<\ldots .
    \]
    Therefore,
    \[
      \beta_{\min}(\tCC):=\CA(\tgamma_1)-\CA(\teta_1)
      =\CA\big(\by_*^2\big)-\CA\big(\bx_*^2\big)
    \]
    is the shortest bar for $\tCC$. As in Step 1, we infer that
    \[
      \big<\td\bx_*^2,\by_*^2\big>=1.
    \]
    Hence there is an arrow connecting these two orbits in the Floer
    graph for $\tCC$ and this is the shortest arrow. The Floer graph
    for $\tCC$ is defined similarly and in fact identical to the
    equivariant Floer graph
    $\Gamma_{\eq}\big(\varphi^2\big)$. Therefore, this arrow is also
    the shortest arrow in $\Gamma_{\eq}\big(\varphi^2\big)$,
    completing the proof of Lemma \ref{lemma:key} modulo Claim
    \ref{claim}.

    \begin{proof}[Proof of Claim \ref{claim}] Since $\pi$ is a
      homomorphism of complexes, we have $\td\talpha_i=0$ and
      $\td\teta_j=\tgamma_j$. Therefore, we only need to show that
      $\tCB$ is an orthogonal basis. For this we do not need to
      distinguish between different types of elements of $\CB$.  Write
      $\CB=\{\xi_i\}$, where $\xi_i=\bz_i+\ldots$ with the dots
      denoting the entries of strictly higher action. Then, by the
      definition of $\hCB$ and Seidel's non-vanishing theorem,
      $\tCB=\{\txi_i\}$ comprises the elements
    \[
      \txi_i:=\pi(\hxi_i)=\bz_i^2+\ldots.
    \]

    Now, as in Example \ref{exam:orthogonality}, the orthogonality
    for $\CB$ is equivalent to that the orbits $\bz_i$ are
    distinct. Similarly, the orthogonality for $\tCB$ is equivalent to
    that the orbits $\bz_i^2$ are again distinct. It follows that
    $\tCB$ is orthogonal if (in fact, iff) $\CB$ is orthogonal which
    is a part of its definition. As a consequence, $\tCB$ is linearly
    independent over~$\Lambda$.

    Finally, since $\tCC=\CF\big(\varphi^2\big)$ as $\Lambda$-modules
    and $\varphi$ is 2-perfect, we have
    \[
      \dim_\Lambda\tCC = \dim_\Lambda \CF\big(\varphi^2\big) =
      \dim_\Lambda \CF(\varphi)=|\CB|=|\tCB|,
    \]
    and $\tCB$ is a basis.
  \end{proof}

  This concludes the proof of Lemma \ref{lemma:key}.
\end{proof}

\medskip\noindent\emph{Step 3: Removing the background assumption.}
Recall that the Floer graphs $\Gamma(\varphi)$ and
$\Gamma_{\eq}\big(\varphi^2\big)$ are stable under small perturbations
of $\varphi$. With this in mind, we can replace $\varphi$ by a
$C^\infty$-small perturbation $\tvarphi$ meeting the background
assumption, since the latter is a $C^\infty$-generic condition. More
precisely, one can change the action of a single orbit by a small
amount (positive or negative) using a localized $C^\infty$-small
perturbation $\tvarphi$. Hence, given any arrow in the Floer graphs
$\tGamma(\varphi)$ and $\tGamma_{\eq}\big(\varphi^2\big)$, pick some
small $\epsilon>0$. Then one can apply local perturbations at the two
ends to shorten its length by $2\epsilon$ while not changing the
lengths of the remaining arrows more than $\epsilon$. It follows that
every shortest arrow in the Floer graphs $\tGamma(\varphi)$ and
$\tGamma_{\eq}\big(\varphi^2\big)$ can be perturbed into the unique
shortest arrow. Now, Theorem \ref{thm:main} for $\varphi$ follows from
that theorem for $\tvarphi$. \qed

\begin{Remark}[The $\Z_p$-equivariant analogue] 
  \label{rmk:char-p2}
  This argument extends with only very minor changes to the $p$th
  iterates $\varphi^{p}$, where $p$ is a prime, proving the analogue
  of Theorem \ref{thm:main} for $\Z_p$-equivariant cohomology of
  $\varphi^p$ over $\F_p$ and relying on the results from \cite{ShZ};
  cf.\ Remark \ref{rmk:char-p}. As a consequence, as in the proof of
  Theorem \ref{thm:Sh}, if $\varphi$ is strongly non-degenerate,
  $\beta$ is \emph{a priori} bounded from above and $|\PP(\varphi)|$
  is greater than the sum of Betti numbers of $M$ over $\Q$, then
  there exists a simple $p$-periodic orbit for every sufficiently
  large prime $p$ as is shown in \cite{Sh:HZ}.
\end{Remark}

\subsection{Degenerate case}
  \label{sec:degenerate}
  Perhaps, the simplest way to extend our arguments and, in
  particular, Theorem \ref{thm:Sh} and Corollary \ref{cor:Sh} to
  include some degenerate Hamiltonian diffeomorphisms as in
  \cite{Sh:HZ} is by bypassing Theorem \ref{thm:main} and using a
  somewhat less precise argument. Below we outline the key steps of
  this generalization, some of which again overlap with \cite{Sh:HZ}.
  The account is deliberately brief. The main new point here is the
  construction of the (equivariant) Floer graph in the degenerate
  case.

  Assume that $\varphi$ is 2-perfect and that the second iteration is
  admissible: $-1$ is not an eigenvalue of $D\varphi_x$ for any
  $x\in\PP_1(\varphi)$. (The latter requirement is satisfied once
  $\varphi$ is replaced by its sufficiently high iterate
  $\varphi^{2^k}$.) Then, as shown in \cite{GG:gap}, for every
  $\bx\in\bPP_1(\varphi)$ we have a canonical isomorphism in local
  Floer cohomology:
  \begin{equation}
    \label{eq:local}
    \HF(\bx)\stackrel{\cong}{\longrightarrow}\HF\big(\bx^2\big)
  \end{equation}
  up to a shift of grading. By the Smith inequality in local Floer
  cohomology, which can be proved by exactly the same argument as in
  \cite{Se} (see also \cite{CG,Sh:HZ}), we have
  $\HF_{\eq}\big(\bx^2\big)\cong \HF\big(\bx^2\big)[\hh]$, where,
  strictly speaking, on the left we have the graded module associated
  with the $\hh$-adic filtration of $\HF_{\eq}\big(\bx^2\big)$. (We
  expect that in this situation $d_{\eq}=d_{\Fl}$, and hence
  $\HF_{\eq}\big(\bx^2\big)\cong \HF\big(\bx^2\big)[\hh]$ literally,
  without passing to graded modules, but we have not been able to
  prove this.)

  For every $\bx\in\bPP_1(\varphi)$, fix a basis $\xi_{i,\bx}$ in
  $\HF(\bx)$ so that this system of bases is
  recapping-invariant. Applying \eqref{eq:local} to this system, we
  obtain bases $\xi'_{i,\bx}$ in $\HF\big(\bx^2\big)$ with
  $\bx\in\bPP_1(\varphi)$, and this system extends to a
  recapping-invariant system over the entire $\bPP_2(\varphi)$.

  We also have a recapping-invariant system of bases in
  $\HF_{\eq}\big(\bx^2\big)$ arising from
  $\wp(\xi_{i,\bx}\otimes \xi_{i,\bx})\in\HF_{\eq}\big(\bx^2\big)$. To
  be more precise, it is convenient to replace the equivariant
  cohomology (local or global) by the homology of the ungraded complex
  $\tCC$ obtained by setting $\hh=1$ as in the proof of Theorem
  \ref{thm:main}. For the sake of brevity, we keep the notation
  $\HF_{\eq}$ for this cohomology suppressing the projection $\pi$ in
  the notation. Set
  $\xi^{\eq}_{\bx,i}:=\wp(\xi_{i,\bx}\otimes \xi_{i,\bx})$. We claim
  that this is a basis in $\HF_{\eq}\big(\bx^2\big)$ which is now just
  a vector space over $\F_2$. Then, extending, we get a recapping
  invariant family of bases over $\bPP_2(\varphi)$.

  To show that $\{\xi^{\eq}_{\bx,i}\}$ is indeed a basis, we first
  recall that, without changing $D\varphi_x$ and the local cohomology,
  $\varphi$ can be deformed near $x$ to the direct product of
  degenerate and totally non-degenerate maps; see \cite[Sect.\
  4.5]{GG:gap}. This essentially reduces the question to the case,
  which for the sake of brevity we will focus on, where $x$ is totally
  degenerate, i.e., all eigenvalues of $D\varphi_x$ are equal to 1 and
  in particular $\varphi$ can be made $C^1$-close to the
  identity. Furthermore, recall that
  $\HF(\bx)\cong\HF(\varphi_f)\cong\HM(f)$ by \cite[Sect.\ 3.3 and
  6]{Gi:CC}, where $\HM$ stands for the local Morse cohomology, $f$ is
  the generating function of $\varphi$ and $\varphi_f$ is the germ of
  the Hamiltonian diffeomorphism generated by $f$. These isomorphisms
  come from continuation maps and there are similar isomorphisms
  (equivariant and non-equivariant) for $\bx^2$ and
  $\varphi_{2f}=\varphi_f^2$, where we can replace the generating
  function for $\varphi^2$ by $2f$; see \cite[Sect.
  4.3]{GG:gap}. Now, as in Example \ref{ex:C2small} and Remark
  \ref{rmk:HtC}, we arrive at the continuation map identifications
  \begin{equation}
    \label{eq:CI}
  \HF_{\eq}\big(\bx^2\big)\cong\HF\big(\bx^2\big)\cong\HF(\bx)\cong
  \H(Y_f),
\end{equation}
where $Y_f$ is a certain topological space (the Conley index)
associated with the critical point $x$ of $f$. Furthermore, the map
$\alpha\mapsto \wp(\alpha\otimes \alpha)$ turns into the Steenrod
square $\Sq$ on $\H(Y_f)$; see \cite{Wi1}. Thus, with these
identifications in mind, $\xi_{\bx,i}=\xi'_{\bx,i}$ and
\begin{equation}
  \label{eq:Sq}
  \xi_{\bx,i}^{\eq}=\Sq(\xi_{\bx,i})=\xi_{\bx,i}+\ldots,
\end{equation}
where the dots stand for the terms of \emph{higher degree} in
$\H(Y_f)$. It follows that the vectors $\xi_{\bx,i}^{\eq}$ are
linearly independent and, since
$\dim_{\F_2} \HF_{\eq}\big(\bx^2\big)=\dim_{\F_2}\HF(\bx)$ by
\eqref{eq:CI}, this system is a basis.

The action filtration spectral sequence in Floer cohomology has
$E_1=\bigoplus_{\bx}\HF(\bx)$ and converges to $\HF(\varphi)$. With
bases fixed, we can canonically collapse this spectral sequence into
one complex with the same features as the ordinary Floer complex
including the action filtration and cohomology equal to
$\HF(\varphi)$; cf.\ \cite[Sect.\ 2.1.3 and 2.5]{GG:LS}. This data is
sufficient to define the Floer graph $\Gamma(\varphi)$ of $\varphi$
with vertices $\xi_{\bx,i}$. (Note that the orbits with $\HF(\bx)=0$
do not contribute to $\Gamma(\varphi)$ and the graph depends on the
choice of the bases $\{\xi_{\bx,i}\}$.) It is also worth keeping in
mind that even in the non-degenerate case this graph and the complex
might differ from the Floer graph as defined in Section
\ref{sec:Floer_Gr} and from the Floer complex. However, they have the
same formal properties as $\CF(\varphi)$ and the original graph, and
the resulting homology is isomorphic to the Floer cohomology
$\HF(\varphi)$; cf.\ \cite{GG:LS}.

A similar construction applies to $\varphi^2$ in the ordinary and
equivariant settings and
$\xi'_{\bx,i}\leftrightarrow \xi_{\bx,i}^{\eq}$ gives rise to an
action-preserving one-to-one correspondence between the vertices of
$\Gamma\big(\varphi^2\big)$ and $\Gamma_{\eq}\big(\varphi^2\big)$.
The condition that the sum \eqref{eq:local-sum} with $\F=\F_2$ is
strictly greater than the sum of Betti numbers guarantees that the
graph $\Gamma(\varphi)$, and hence $\Gamma\big(\varphi^2\big)$ and
$\Gamma_{\eq}\big(\varphi^2\big)$, have at least one arrow.
    
Denote by $\beta_{\min}$ the length of the shortest arrows in a Floer
graph. Our goal is to show that $\varphi$ cannot be $2^k$-perfect,
where $k$ is sufficiently large, assuming an \emph{a priori} upper
bound on $\beta_{\min}\big(\varphi^{2^k}\big)$ as in Theorem
\ref{thm:Sh}. (Note that in contrast with the non-degenerate case the
Floer graphs are now sensitive to small perturbations of $\varphi$ and
we usually cannot make the shortest arrow unique without changing the
graph unless $\dim_{\F_2}\HF(x)=1$ for all $x\in\PP_1(\varphi)$.)

The equivariant pair-of-pants product $\wp$ extends to the complexes
we have constructed, and Seidel's non-vanishing theorem takes the form
  \begin{equation}
    \label{eq:Se-nonvanishing2}
    \wp(\xi_{\bx,i}\otimes\xi_{\bx,i})=\xi_{\bx,i}^{\eq}+\ldots,
\end{equation}
where now the dots stand for terms with action greater than or equal
to the action of $\xi_{\bx,i}^{\eq}$, but with the provision that the
first term enters the whole sum with non-zero coefficient. (This is a
consequence of \eqref{eq:Sq} and Seidel's non-vanishing theorem
applied to the non-degenerate part in the splitting of $\varphi$ at
$x$.)
  
Pick one of the shortest arrows, say $v$, in
$\Gamma_{\eq}\big(\varphi^2\big)$. After recapping, we can ensure that
the beginning of $v$ has the form $\xi^{\eq}_{\bx,i}$. Using
\eqref{eq:Se-nonvanishing2} and the facts that $\wp$ is a chain map
and $v$ is a shortest arrow, it is not hard to see that $\xi_{\bx,i}$
is the beginning of an arrow in $\Gamma(\varphi)$ whose length is at
most $\beta_{\min}^{\eq}\big(\varphi^2\big)/2$. Hence,
  \begin{equation}
    \label{eq:inequality2}
    2\beta_{\min}(\varphi)\leq \beta_{\min}^{\eq}\big(\varphi^2\big).
  \end{equation}
  (This proves a somewhat weaker version of Theorem \ref{thm:main}:
  every shortest equivariant arrow comes from an arrow for $\varphi$.)

On the other hand,  
  \begin{equation}
\label{eq:inequality1}
\beta_{\min}^{\eq}\big(\varphi^2\big)\leq
\beta_{\min}\big(\varphi^2\big).
  \end{equation}
  Indeed,
  $\dim_{\F_2}\HF^I\big(\varphi^2\big) \geq
  \rk_{\F_2[\hh]}\HF^I_{\eq}\big(\varphi^2\big)$ for any action
  interval $I$, as is easy to see from the $\hh$-adic filtration
  spectral sequence.  Applying this to an interval tightly enclosing
  one of the shortest arrows in $\Gamma_{\eq}\big(\varphi^2\big)$ we
  obtain \eqref{eq:inequality1}. In fact, we expect that, as in the
  non-degenerate case, $\Gamma_{\eq}\big(\varphi^2\big)$ incorporates
  all arrows of $\Gamma\big(\varphi^2\big)$ (and, perhaps, more). This
  is a stronger statement than \eqref{eq:inequality1}, but
  \eqref{eq:inequality1} is sufficient for our purposes.
  
  Combining \eqref{eq:inequality2} and \eqref{eq:inequality1}, we see
  that $2\beta_{\min}(\varphi)\leq \beta_{\min}\big(\varphi^2\big)$.
  As a consequence,
  $\beta_{\min}\big(\varphi^{2^k}\big)\geq 2^k\beta_{\min}(\varphi)$
  as long as $\varphi$ is $2^k$-perfect. When
  $\beta_{\min}\big(\varphi^{2^k}\big)$ is bounded from above, this is
  impossible for large $k$.

  We note in conclusion that in the non-degenerate case this proof
  reduces to an argument which does not rely on persistence homology
  and is ultimately simpler and more direct, although arguably less
  structured, than our proof of Theorem \ref{thm:Sh} via Theorem
  \ref{thm:main}.

\end{document}